\newtheorem{dummy}{anything}[section]
\newtheorem{theorem}[dummy]{Theorem}
\newtheorem{lemma}[dummy]{Lemma}
\newtheorem{proposition}[dummy]{Proposition}
\newtheorem{corollary}[dummy]{Corollary}
\theoremstyle{definition}
 \newtheorem{remark}[dummy]{Remark}
 \newtheorem*{acknowledgement}{Acknowledgement}
\newcommand{\z}{\mathbb Z}
\newcommand{\w}{\widetilde}
\newcommand{\rp}{\mathbb R \mathrm P}
\newcommand{\cp}{\mathbb C \mathrm P}
\newcommand{\rpi}{\rp^{\infty}}
\newcommand{\cpi}{\cp^{\infty}}
\newcommand{\scs}{\, \sharp_ {S^1}}
\DeclareMathOperator{\Pin}{Pin}
\DeclareMathOperator{\Wh}{Wh}
\DeclareMathOperator{\Spin}{Spin}
\newcommand{\XX}[1]{X^5(#1)}
\newcommand{\HH}[1]{H\rp^5_{#1}}
\newcommand{\Sig}{\Sigma^5}
\newcommand{\OS}{\Omega^{\Spin}}
\newcommand{\vsp}{\phantom{$\Big ($}}
\begin{document}
\title{Free Involutions on $S^2 \times S^3$}
\author{Yang Su}
\address{Hua Loo-Keng Key Laboratory of Mathematics
\newline \indent
Chinese Academy of Sciences
\newline\indent
Beijing, 100190, China}

\address{Mathematisches Institut
\newline \indent 
Universit\"at M\"unster
\newline \indent
Einsteinstra\ss e 62, 48149, M\"unster, Germany}

\email{suyang{@}math.ac.cn}

\date{Dec.~16, 2010}

\begin{abstract}
In this paper, we classify smooth $5$-manifolds with fundamental
group isomorphic to $\z/2$ and universal cover diffeomorphic to $S^2
\times S^3$.This gives a classification of smooth free
involutions on $S^2 \times S^3$ up to conjugation.
\end{abstract}

\maketitle

\section{Introduction}\label{sec:one}
The study of symmetries of manifolds is an important topic of
topology and geometry. Free involutions (fixed point free $\z/2$-actions) may be thought as the simplest type of symmetries a manifold can possess. The study of free involutions on manifolds has a long history. For certain manifolds whose topology is simple, the classification of free involutions are known: free involutions on the spheres $S^n$ ($n \ge 5$) were classified in the PL- category by the classification of fake real projective spaces by Wall \cite{Wall}; free involutions on fake $\cp^n$ ($n \ge 3$)  were classified by Sady \cite{Sady}; topological free involutions on $S^1 \times S^n$ were recently classified by Jahren and Kwasik \cite{J-K}.

In dimension $5$, besides the cases $S^5$ and $S^1 \times S^4$ mentioned above,
Hambleton studied the existence of free involutions on
simply-connected spin $5$-manifolds \cite{Ham}. A classification of free involutions on simply-connected
$5$-manifolds with torsion free second homology and trivial action
on $H_2$ is obtain by Hambleton and Su in \cite{Ham-Su}. In this note we give a classification of smooth free involutions on $S^2 \times S^3$ up to conjugation. As the action on $H_2$ may be nontrivial, this work can be thought as a partial generalization of \cite{Ham-Su}.

\smallskip
The classification of free involutions on a manifold up to conjugation
is equivalent to the classification of the quotient spaces. In the
paper we study the classification of smooth $5$-manifolds with fundamental group
$\z/2$ and universal cover $S^2 \times S^3$, and henceforth obtain a
classification of smooth free involutions on $S^2 \times S^3$.

\smallskip

Before stating the main theorem, we give some examples of smooth
free involutions on $S^2 \times S^3$. First, there are some
obvious (linear) ones. Let $T_i$ be the free involution defined by
$$S^2 \times S^3 \to S^2 \times S^3, \ \ (x,y) \mapsto (-x, \tau_i (y))$$
where $-1$ is the
antipodal map on $S^2$ and $\tau_i$ is the reflection with $i$ $(-1)$-eigenvalues,
$i=0,1,2,3,4$. We denote the corresponding quotient manifold by
$Y_i$. Let $\eta_2$ be the canonical line bundle ove $\rp^2$, then clearly
$Y_i$ is the sphere bundle of the vector bundle $i\eta_2 \oplus
(4-i)\underline{\mathbb R}$. 

Similarly, let $T'_i$ be the free involution
$$S^2 \times S^3 \to S^2 \times S^3, \ \ (x,y) \mapsto (\tau'_i(x), -y)$$ 
where $-1$ is the antipodal map on $S^3$ and $\tau'_i$ is a reflection with $i$ $(-1)$-eigenvalues, $i=0,1,2,3$. We denote the corresponding quotient
manifold by $Z_j$. Let $\eta_3$ be
 the canonical line bundle ove $\rp^3$, then $Z_j$ is the sphere bundle of the vector bundle $j\eta_3 \oplus (3-j)\underline{\mathbb R}$

Another class of examples is given by the following construction.
$S^2 \times S^3$ can be realized as the link $\Sigma^5_q$ of a
Brieskorn singularity of type $A_q$, $q=0,2,4,6,8$. There is a
smooth free involution $T$ on $\Sigma^5_q$ induced by an involution
of the ambient space. We denote the quotient manifold by
$\Sigma^5_q/T$. 

A third class of examples is given by the $S^1$-connected sum
of $5$-dimensional fake projective spaces. By this construction we obtain
manifolds $X^5(q)$ ($q=0,2,4,6,8$) with $\pi_1(X^5(q)) \cong \z/2$
and universal cover $S^2 \times S^3$. Detailed description of these
two classes of manifolds will be given in \S 2C. These manifolds correspond to
nonlinear involutions on $S^2 \times S^3$.

\smallskip

Let $T \colon S^2 \times S^3 \to S^2 \times S^3$ be a smooth free
involution, $M^5 =S^2 \times S^3/T$ be the quotient space. Then
according to the action of $T$ on $\pi_2(S^2 \times S^3)$, as a
$\z[\z/2]$-module, $\pi_2(M)$ is isomorphic to $\z_+$ (the trivial
module) or $\z_-$ (the nontrivial module). The second space of the
Postnikov tower of $M$, $P_2(M)$, is a fibration over $\rp^{\infty}$
with fiber $\cp^{\infty}$. There are two such
fibrations in the $\z_-$-case, one admits a section and the other
one doesn't. We denote the former by $P$ and the latter by $Q$. More
precise information about $P$ and $Q$ will be given in \S 2A.

\begin{theorem}\label{thm:one}
Let $M^5$ be a smooth $5$-manifold with $\pi_1(M) \cong \z/2$ and universal
cover $\w{M} \cong S^2 \times S^3$.

\begin{enumerate}
\item If $\pi_2(M) \cong \z_+$, then $M$ is orientable, and the classification of $M$ up to diffeomorphism is given in the following table

\smallskip

\begin{center}
\begin{tabular}{|c|c|}
\hline
\vsp
          $w_2(M)=0  $          & $w_2(M)\ne 0$     \\[.3ex]
\hline
\vsp
          $S^2 \times \rp^3 $   & $X^5(q)$, \ \ $q=0,2,4,6,8$ \\[.3ex]
\hline
\end{tabular}
\end{center}
The manifolds $X^5(q)$ are classified by the $\Pin^+$-bordism class of their characteristic submanifold $N_q$ with $[N_q]=q \in \Omega_4^{\Pin^+}/\pm =\{0,1,\dots, 8\}$. 

\

\item If $\pi_2(M) \cong \z_-$ and $M$ is orientable, then $P_2(M)=Q$ and the classification of $M$ up to diffeomorphism is given in the following table
\smallskip

\begin{center}
\begin{tabular}{|c|c|}
\hline
\vsp
          $w_2(M)=0  $          & $w_2(M)\ne 0$     \\[.3ex]
\hline
\vsp
          $Y_1$, $Y_1'$                 & $\Sigma^5_q/T$, \ \ $q=0,2,4,6,8$ \\[.3ex]
\hline
\end{tabular}
\end{center}
The manifolds $\Sigma^5_q/T$ are classified by the $\Pin^+$-bordism class of their characteristic submanifold $N_q$ with $[N_q]=q \in \Omega_4^{\Pin^+}/\pm =\{0,1,\dots, 8\}$. The manifolds $Y_1$ and $Y_1'$ are distinguished by the following $KO$-theoretic invariant: let $ f \colon Y_1 \to Q$ and $f' \colon Y_1' \to Q$ be the second stage Postnikov map for $Y_1$ and $Y_1'$ respectively. There is a canonical element $u \in ko^4(Q)$ such that $\langle f^*(u), [Y_1]_{ko} \rangle =0 \in ko_1 = \z/2$ and $\langle f'^*(u), [Y_1']_{ko} \rangle =1 \in ko_1 = \z/2$, where $[Y_1]$ and $[Y_1']$ are the $KO$-fundamental class of $Y_1$ and $Y_1'$ respectively. Furthermore, $Y_{1}$ and $Y_{1}'$ are not homotopy equivalent.

\

\item If $\pi_2(M) \cong \z_-$ and $M$ is nonorientable, then the classification of $M$ up to diffeomorphism is given in the following table
\smallskip

\begin{center}
\begin{tabular}{|c|c|c|}
\hline
\vsp
                & $w_2(M)=0  $          & $w_2(M)\ne 0$     \\[.3ex]
\hline
\vsp
$P_2(M)=P$   & $Z_1 $   & $-$ \\[.3ex]
\hline
\vsp
$P_2(M)=Q$   & $Y_2$                 & $S^3 \times \rp^2$ \\[.3ex]
\hline
\end{tabular}
\end{center}
\end{enumerate}
\end{theorem}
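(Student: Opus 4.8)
The plan is to run Kreck's modified surgery programme, so the first task is to pin down the \emph{normal $2$-type} of such an $M$. By the discussion preceding the theorem, the second Postnikov stage $P_2(M)$ is one of the $\cpi$-bundles over $\rpi$ — in the $\z_+$-case the trivial one, in the $\z_-$-case either $P$ (the one with a section) or $Q$ — and together with the values of $w_1(M)$ and $w_2(M)$ this determines a fibration $\xi\colon B\to BO$ carrying a $2$-connected lift $\bar\nu\colon M\to B$ of the stable normal bundle, and $B$ is easily seen to be a normal $2$-type in Kreck's sense. One first sorts out which $B$ can actually arise: the $\z_+$-case forces $M$ orientable; in the $\z_-$-case $M$ orientable forces $P_2(M)=Q$; and the combination ``$P_2(M)=P$, nonorientable, $w_2(M)\ne 0$'' is shown to be empty, which is the entry ``$-$'' in case~(3). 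Since $\pi_1=\z/2$ and $\Wh(\z/2)=0$, once two such manifolds are exhibited over a common $B$ and joined by an $s$-cobordism they are diffeomorphic, so the whole problem reduces to a bordism computation plus a middle-dimensional surgery obstruction.

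For the bordism computation, for each admissible $B$ one computes the relevant bordism group $\Omega_5(B,\xi)$ by the James spectral sequence of $\cpi\to B\to \rpi\times BO$ (equivalently an Atiyah--Hirzebruch argument over $B\z/2$ with the appropriate twisted coefficients), using the low-dimensional (co)homology of $\rpi$ and $\cpi$ and the structure maps. The hypothesis $\w M\cong S^2\times S^3$ is what makes the answer small: the lift $\bar\nu$ is $2$-connected, so it carries the generator of $\pi_2(S^2\times S^3)$, and this fixes the image of $[M]$ in $H_5(B)$ and hence confines the class of $(M,\bar\nu)$ to a single coset of bounded size in $\Omega_5(B,\xi)$. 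One then checks that the explicit manifolds listed — $S^2\times\rp^3$, the $X^5(q)$, the $Y_i$, the $Z_j$, the $\Sigma^5_q/T$, and $S^3\times\rp^2$ — realise \emph{all} the classes in that coset; this is exactly the point of the constructions in \S 2C, the Brieskorn links $\Sigma^5_q$ and the $S^1$-connected sums of fake $\rp^5$'s being rigged to sweep out the full range of the $\Pin^+$-bordism invariant, with the even values $q=0,2,4,6,8$ being those realised under the constraint $\w M\cong S^2\times S^3$.

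The surgery step is then standard: given $M_0,M_1$ over the same $B$ with the same image in $\Omega_5(B,\xi)$, choose a $B$-bordism $W^6$ and do surgery below the middle dimension to make $W\to B$ $2$-connected. The obstruction to turning $W$ into an $s$-cobordism lies in Kreck's monoid $l_6$ over $\z[\z/2]$ with the relevant orientation character, a quotient of $L_6(\z[\z/2]^{w})$; using Wall's computation of these $L$-groups together with the extra indeterminacy built into $l_6$ one shows the obstruction vanishes (or, where it does not vanish outright, is removed by one further stabilisation which is then cancelled, using that $\w M$ is already $S^2\times S^3$). Hence the $B$-bordism class is a complete diffeomorphism invariant within each case, and it remains only to express it through the invariants in the statement: for $w_2(M)\ne 0$ one takes $N_q\subset M$ to be the characteristic submanifold dual to the generator of $H^1(M;\z/2)$, checks via Stiefel--Whitney classes that it inherits a $\Pin^+$-structure, and identifies the bordism class of $(M,\bar\nu)$ with $[N_q]\in\Omega_4^{\Pin^+}\cong\z/16$, the residual $\pm$-ambiguity (from the orientation and the splitting choice) reducing the range to $\{0,1,\dots,8\}$; for $Y_1$ versus $Y_1'$ the bordism argument already shows these are the only two manifolds with $w_2=0$, $\pi_2=\z_-$, $M$ orientable, and one separates them by constructing $u\in ko^4(Q)$ out of the Postnikov data of $Q$ and evaluating $\langle f^*u,[M]_{ko}\rangle\in ko_1=\z/2$, the two values $0$ and $1$ being attained; since this evaluation depends only on the homotopy type, $Y_1\not\simeq Y_1'$ as well.

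The main obstacle I expect is the combination of the bordism computation with the middle-dimensional obstruction over $\z[\z/2]$: one must prove both that $\Omega_5(B,\xi)$ restricted to the ``$S^2\times S^3$-slice'' is exactly the displayed list and no larger, and that the $l_6$-obstruction genuinely vanishes or can be absorbed — and this is precisely where the nontrivial $\z/2$-action on $\pi_2$, the distinction between the fibrations $P$ and $Q$, and the $\Pin^+$-bordism subtleties of the characteristic submanifold all interact.
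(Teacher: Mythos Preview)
Your overall strategy is the paper's own: Kreck's modified surgery, identification of the normal $2$-type $B$ from $P_2(M)$ and $(w_1,w_2)$, computation of $\Omega_5^{(B,p)}$, and the observation that $L_6(\z[\z/2]^{\pm})\cong\z/2$ is detected by Arf and can always be surgered away on the bordism, so that bordant normal $2$-smoothings are diffeomorphic. Your treatment of the cases with nonzero $w_2$ via the $\Pin^+$-bordism class of the characteristic submanifold, and of the $P_2(M)=P$ and non-orientable $Q$ cases, is in line with the paper.

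There is, however, a genuine gap in the case $(P_2(M)=Q,\ w_1=0,\ w_2=0)$. You write that ``the bordism argument already shows these are the only two manifolds'' and then distinguish $Y_1$ from $Y_1'$ by the $ko$-invariant. But the bordism computation only gives $\Omega_5^{\Spin}(Q)\cong\z/2$; it does \emph{not} show that the nontrivial class is represented by a normal $2$-smoothing, i.e.\ by a manifold with $\pi_1\cong\z/2$ and $\pi_2\cong\z_-$. A priori that class could be represented only by singular manifolds whose reference maps to $Q$ fail to be $3$-equivalences. The paper's nontrivial class is represented by $V(2)/\tau\times S^1\to Q$ (with $V(2)\subset\cp^3$ the degree-$2$ quadric), which is certainly not a normal $2$-smoothing. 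The substance of the proof --- flagged in the paper as ``the major work'' of \S4 --- is to perform explicit surgery on this representative: first kill the $S^1$-factor, then analyse $\pi_2$ of the result as a $\Lambda=\z[\z/2]$-module via Mayer--Vietoris and Wall's exact braid, solve two extension problems ($0\to\z_+\to H_2(M';\Lambda)\to\z_-^2\to0$ and the identification of the surgery kernel as $\Lambda$ rather than $\z_+\oplus\z_-$), and finally kill the kernel to produce $Y_1'$. None of this is automatic, and your proposal does not address it. Without this construction you have shown only that there are \emph{at most} two diffeomorphism types in this slot, not that there are two.

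Two smaller remarks. First, your handling of the $l_6$-obstruction (``removed by one further stabilisation which is then cancelled, using that $\w M$ is already $S^2\times S^3$'') is more elaborate than needed: since the obstruction is Arf, one simply modifies the bordism $W$ itself to kill it; no stabilisation of the boundary is required. Second, your phrasing that the $2$-connectedness of $\bar\nu$ ``confines the class of $(M,\bar\nu)$ to a single coset'' is not quite the mechanism: rather, one computes the whole group $\Omega_5^{(B,p)}$, determines which classes are hit by normal $2$-smoothings (e.g.\ for $P_2(M)=P$ only the nontrivial class in $\z/2$ is, since $\langle t^3x,\bar\nu_*[M]\rangle$ must equal $1$), and then divides by the action of $\mathrm{Aut}(B\to BO)$ to pass from normal $2$-smoothings to diffeomorphism types.
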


Note that we have explained the construction of all the manifolds in the above tables except for $Y_1'$, which will be constructed in \S 4 using surgery.

\smallskip

As a corollary, we have a classification of smooth free involutions
on $S^2 \times S^3$.

\begin{corollary} \label{cor:one}
There are exactly $13$ orientation preserving smooth involutions on
$S^2 \times S^3$ and $3$ orientation reversing ones. The quotient
spaces are given in the above tables.
\end{corollary}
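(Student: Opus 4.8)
The plan is to deduce the corollary from Theorem~\ref{thm:one} by a direct count, after recording the translation between free involutions and quotient manifolds. First I would note the standard dictionary: since $S^2\times S^3$ is simply connected, the orbit map of a smooth free involution $T$ is the universal covering of $M=(S^2\times S^3)/T$, so $M$ is a smooth $5$-manifold with $\pi_1(M)\cong\z/2$ and $\widetilde M\cong S^2\times S^3$; conversely the unique nontrivial deck transformation of such an $M$ is a smooth free involution on its universal cover. Two such involutions $T_1,T_2$ are conjugate by a diffeomorphism of $S^2\times S^3$ if and only if the quotient manifolds are diffeomorphic: a conjugating diffeomorphism descends to the quotients, and conversely any diffeomorphism of the quotients lifts to the universal covers and conjugates the deck transformations (using that $\z/2$ has no nontrivial automorphism). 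Hence conjugacy classes of smooth free involutions on $S^2\times S^3$ correspond bijectively to the diffeomorphism classes of manifolds classified in Theorem~\ref{thm:one}.

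Next I would sort out orientations. An orientation of $M$ pulls back along the covering to a $T$-invariant orientation of $S^2\times S^3$, and a $T$-invariant orientation descends; since orientation-reversal of $T$ is detected by the $\z/2$-action on the top homology of the cover, $T$ is orientation-preserving precisely when $M$ is orientable. So the orientation-preserving involutions match the orientable entries of the tables in Theorem~\ref{thm:one}, and the orientation-reversing ones the nonorientable entries.

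Finally I would count. The entries in the three tables are pairwise non-diffeomorphic: the $\z[\z/2]$-module $\pi_2(M)$, orientability, $w_2(M)$, and the Postnikov invariant $P_2(M)$ separate the parts and the rows, and within a cell Theorem~\ref{thm:one} already records that the listed manifolds are distinct (the families $X^5(q)$ and $\Sigma^5_q/T$, $q=0,2,4,6,8$, by their $\Pin^+$-bordism invariant, and $Y_1,Y_1'$ even up to homotopy equivalence). Part~(1) contributes the orientable manifolds $S^2\times\rp^3$ and $X^5(q)$ ($q=0,2,4,6,8$), and part~(2) the orientable manifolds $Y_1,Y_1'$ and $\Sigma^5_q/T$ ($q=0,2,4,6,8$), giving $1+5+2+5=13$ orientation-preserving involutions; part~(3) contributes the nonorientable manifolds $Z_1$, $Y_2$ and $S^3\times\rp^2$, giving $3$ orientation-reversing ones, with quotients exactly as in the tables. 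The only point that needs the slightest care is the lifting statement in the first paragraph — routine covering-space theory — so I do not anticipate a real obstacle; the corollary is essentially bookkeeping on top of Theorem~\ref{thm:one}.
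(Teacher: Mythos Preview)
Your proposal is correct and is exactly the argument the paper implicitly intends: the corollary is stated without proof immediately after Theorem~\ref{thm:one}, and your dictionary between conjugacy classes of free involutions and diffeomorphism types of quotients, together with the orientability check and the straightforward count $1+5+2+5=13$ and $3$, is the obvious bookkeeping that turns the theorem into the corollary.
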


\begin{remark}It will be interesting to give explicit description of the involutions with quotient space $X^5(q)$ and $Y_1'$.

\end{remark}

\begin{remark}
There are topological free involutions on $S^2 \times S^3$ which are not conjugate to any smooth one. In
\cite{Ham-Su}, a $5$-manifold $*(S^2 \times \rp^3)$ is constructed.
This manifold is homotopy equivalent to $S^2 \times \rp^3$, but
doesn't admit any smooth structure. Therefore the
deck-transformation on the universal cover is a non-smoothable free
involution on $S^2 \times S^3$. The method of this paper is also
valid for the classification in the topological category.
\end{remark}

The remaining part of this paper is organized as follows: \S 2 is for preliminaries, detailed 
description and properties of the second stage Postnikov towers, constructions of the manifolds $\Sigma_q^5 /T$ and $X(q)^5$ are given; Theorem \ref{thm:one} is proved using the method of
modified surgery \cite{KreckM}, \cite{Kreck}, therefore in \S 3 we first determine the normal $2$-types and the corresponding bordism groups;
proof of the theorem is given in \S 4, where the major work is for the construction of the manifold $Y_1'$. 

\begin{acknowledgement}
I would like to thank the Hausdorff Institute for Mathematics  at Universit\"at Bonn for a research visit in December 2010. I am grateful  to  D.~Crowley, M.~Kreck and M.~Olbermann for helpful discussions. 
\end{acknowledgement}

\section{Preliminaries}\label{sec:two}

\subsection{Second space of the Postnikov tower}
Let $M^5$ be the quotient space of a free involution on $S^2 \times
S^3$, then $\pi_1(M) \cong \z/2$ and $\pi_2(M) \cong \z$. Accoring
to \cite{Baues}, the second space of the Postnikov tower of $M$,
$P_2(M)$, is a fibration over $\rp^{\infty}$ with fiber
$\cp^{\infty}$, and such a fibration is determined by its
$k$-invariant $k \in
H^3(\rp^{\infty};\underline{\pi_2(\cp^{\infty})})$, which is the
obstruction for a section.

If $\pi_2(M) \cong \z_+$ as a $\z[\z/2]$-module, since
$H^3(\rp^{\infty};\pi_2(\cp^{\infty}))=0$, we have $$P_2(M) =
\rp^{\infty} \times \cp^{\infty}.$$

If $\pi_2(M) \cong \z_-$, since
$H^3(\rp^{\infty};\underline{\pi_2(\cp^{\infty})}) \cong \z/2$,
there are two such fibrations up to fiber homotopy equivalence.
These can be described as follows:

Let $c$ denote the complex conjugation on $\cpi$, $(-1)$ denote the
antipodal map on $S^{\infty}$. Let $P = (\cpi \times
S^{\infty})/(c,-1)$, then there is a fibration $\cpi \to P \to \rpi$
with a section $\sigma \colon \rpi \to P$, $x \mapsto [x, \widetilde
x]$, where $\widetilde x$ is any preimage of $x$ in $S^{\infty}$.

There is a free involution $\tau$ on $\cpi$, with $\tau_*=(-1)$ on
$H_2(\cpi)$. (Under homogeneous coordinates, $\tau([z_0, z_1,
z_2, z_3, \cdots]) = [-\overline{z_1}, \overline{z_0}, -\overline{z_3}, \overline{z_2}, \cdots]$.) Let $Q =
(\cpi \times S^{\infty})/(\tau, -1)$, then there is a fibration
$\cpi \to Q \to \rpi$ which corresponds to the nontrivial
$k$-invariant. Since the involution $\tau$ on $\cpi$ is free, $Q$ is homotopy equivalent to $\cpi/\tau$, and the latter is an $\rp^2$-bundle over $\mathbb H \mathrm P^{\infty}$: $\rp^2 \to \cpi/\tau \to \mathbb H \mathrm P^{\infty}$. 
For details see \cite[p.~44]{Olbermann}

\smallskip

The homology groups of $Q$ were calculated by Olbermann
\cite{Olbermann}
\begin{lemma}\cite[p.~48--49]{Olbermann} \label{lemma:hmlg q}
\begin{enumerate}
\item $H^*(Q;\z/2) \cong \z/2[t,q]/t^3$, where $|t|=1$, $|q|=4$.
\item the integral homology of $Q$ up to dimension $6$ is given by
\begin{center}
\begin{tabular}{|c|c|c|c|c|c|c|}
\hline \vsp
                      & $1$    & $2$    & $3$    & $4$              & $5$         & $6$     \\ [.3ex]
\hline \vsp
          $H_*(Q;\z)$ & $\z/2$ & $0$ & $0$ & $ \z$ & $\z/2$  & $0$  \\[.3ex]
\hline
\end{tabular}
\end{center}
\item $H_4(Q;\z_-) \cong \z/2$, $H_5(Q;\z_-) = 0$, $H_6(Q;\z_-)\cong \z $.
\end{enumerate}
\end{lemma}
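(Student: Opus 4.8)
The plan is to read off all three groups from the Serre spectral sequence of the $\rp^2$-bundle $\rp^2\to\cpi/\tau\to\mathbb H \mathrm P^{\infty}$ recorded above, via the homotopy equivalence $Q\simeq\cpi/\tau$. The engine is a degeneration principle: the (co)homology of $\mathbb H \mathrm P^{\infty}$ is supported in degrees divisible by $4$, while that of $\rp^2$ is supported in degrees $\le 2$; a Serre differential $d_r$ shifts the base degree by $r$ and the fibre degree by $r-1$, so a nonzero $d_r$ would force $r\equiv 0\pmod 4$ (otherwise the target vanishes for base-degree reasons), hence $r\ge 4$ and a fibre shift of at least $3$ --- impossible. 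So the spectral sequence degenerates at $E_2$ for every coefficient system occurring below. Since $\mathbb H \mathrm P^{\infty}$ is $2$-connected, moreover, $\pi_1(\rp^2)\xrightarrow{\ \cong\ }\pi_1(Q)=\z/2$ and no local system twists over the base; and because the base has free integral homology, the $E_2$-page is simply $H_*(\mathbb H \mathrm P^{\infty};\z)\otimes H_*(\rp^2;\mathcal L)$ (and dually for cohomology), where the coefficient system $\z_-$ restricts to the orientation sheaf of the fibre $\rp^2$. I record for use $H_*(\rp^2;\z)=(\z,\z/2,0)$ and, by Poincar\'e duality, $H_*(\rp^2;\z_-)=(\z/2,0,\z)$ in degrees $0,1,2$.

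For part (1) I would run the mod $2$ cohomology spectral sequence; degeneration gives $H^*(Q;\z/2)\cong H^*(\mathbb H \mathrm P^{\infty};\z/2)\otimes H^*(\rp^2;\z/2)$ as a module. The generator $t\in H^1(\rp^2;\z/2)$ lifts to $H^1(Q;\z/2)$ --- which is $\z/2$ and restricts isomorphically to the fibre --- so Leray--Hirsch presents $H^*(Q;\z/2)$ as the free module over $H^*(\mathbb H \mathrm P^{\infty};\z/2)=\z/2[q]$ on $1,t,t^2$, with $q$ the pullback of the degree-$4$ generator of the base. Since $H^3(Q;\z/2)=0$ one gets $t^3=0$, and the resulting ring is $\z/2[t,q]/t^3$.

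For parts (2) and (3) I would run the homology spectral sequence with coefficients $\z$, resp.\ $\z_-$. Both degenerate at $E_2$, and in each total degree at most one entry $H_p(\mathbb H \mathrm P^{\infty};\z)\otimes H_q(\rp^2;\mathcal L)$ survives, so there are no extension problems. With $\z$ this gives $H_n(Q;\z)\cong \z,\ \z/2,\ 0,\ 0$ according as $n\equiv 0,1,2,3\pmod 4$; with $\z_-$ it gives $H_n(Q;\z_-)\cong \z/2,\ 0,\ \z,\ 0$ according as $n\equiv 0,1,2,3\pmod 4$. Specialising to $n=1,\dots,6$ reproduces the two tables.

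Once the $\mathbb H \mathrm P^{\infty}$-bundle is available the computation is essentially forced, so I expect the only delicate point to be the coefficient bookkeeping in (3): checking that the nontrivial module $\z_-$ restricts to the orientation sheaf of the fibre $\rp^2$ and does not twist over the base, together with the routine verification that no multiplicative or additive extensions intervene. This is precisely why I would prefer this bundle to the Borel fibration $\cpi\to Q\to\rpi$: there the Serre differential $d_3$ is \emph{nonzero} --- it must carry the fibre class of $H^2(\cpi;\z/2)$ to $t^3$ --- and pinning that nonvanishing down directly is the one non-formal step the present route sidesteps. (The statement is, of course, Olbermann's \cite{Olbermann}; the above is a streamlined derivation.)
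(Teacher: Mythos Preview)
Your proof is correct and follows precisely the route the paper signals: the paper does not prove this lemma itself but cites Olbermann, and just before the statement it records the fibration $\rp^2 \to \cpi/\tau \to \mathbb H \mathrm P^{\infty}$, which is exactly the bundle you use. Your degeneration argument (base concentrated in degrees divisible by $4$, fibre in degrees $\le 2$, hence all $d_r$ vanish), the Leray--Hirsch identification for part~(1), and the observation that $\z_-$ restricts to the orientation sheaf of the fibre for part~(3) are all sound, and since each total degree carries at most one nonzero $E_2$-term there are indeed no extension issues.
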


Analogously, we compute the homology groups of $P$:
\begin{lemma}\label{lemma:hmlg p} \
\begin{enumerate}
\item $H^*(P;\z/2) \cong \z/2[t,x]$, where $t$ is the pull-back of the nontrivial element of $H^1(\rpi;\z/2)$ and $x$
restricts to the nontrivial element of $H^2(\cpi;\z/2)$.
\item the integral homology of $P$ up to dimension $6$ is given by
\begin{center}
\begin{tabular}{|c|c|c|c|c|c|c|}
\hline \vsp
                      & $1$    & $2$    & $3$    & $4$              & $5$         & $6$     \\ [.3ex]
\hline \vsp
          $H_*(P;\z)$ & $\z/2$ & $\z/2$ & $\z/2$ & $\z \oplus \z/2$ & $(\z/2)^2$  & $(\z/2)^2$  \\[.3ex]
\hline
\end{tabular}
\end{center}
\item $H_5(P;\z_-) \cong \z/2$ is mapped injectively into $H_5(P;\z/2)$, whose image is dual to $t^3x$; $H_6(P;\z_-)\cong \z \oplus (\z/2)^2$.
\end{enumerate}
\end{lemma}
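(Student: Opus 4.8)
The plan is to compute the homology of $P = (\cpi \times S^{\infty})/(c,-1)$ by exploiting the fibration $\cpi \to P \to \rpi$ and, crucially, the presence of the section $\sigma$. First I would handle the $\z/2$-cohomology ring. The Serre spectral sequence of the fibration has $E_2^{p,q} = H^p(\rpi; \underline{H^q(\cpi;\z/2)})$; since $c$ acts trivially on $H^*(\cpi;\z/2)$ (complex conjugation fixes the mod-$2$ reduction of the generator), the coefficients are untwisted and $E_2 = \z/2[t] \otimes \z/2[x]$ with $|t|=1$, $|x|=2$. The existence of the section forces all differentials on the fibre class $x$ to vanish (a section splits $H^*(P) \to H^*(\cpi)$ surjectively), so the spectral sequence collapses and $H^*(P;\z/2) \cong \z/2[t,x]$, which is part (1). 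This immediately gives the mod-$2$ Betti numbers in each degree, and then I would read off the Poincaré series through degree $6$: $1, t, x\!+\!t^2, tx\!+\!t^3, x^2\!+\!t^2x\!+\!t^4, \dots$, i.e. mod-$2$ ranks $1,1,2,2,3,3,4$ in degrees $0$ through $6$.

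Next I would pin down the integral homology in (2). The key structural input is that $P$ fibers over $\rpi$ with a section and fibre $\cpi$, which (together with the $\z_-$-action on $\pi_2$) is the analogue of Olbermann's analysis of $Q$; alternatively, one may observe that the line bundle associated to $S^\infty \to \rpi$ pulled into $P$ realizes $P$ as (homotopy equivalent to) the projectivization $\cp(L \oplus \underline{\mathbb C}^\infty)$ or a similar explicit bundle, making the integral computation a Leray–Hirsch / Gysin argument. Concretely I would run the integral Serre spectral sequence: $E_2^{p,q} = H^p(\rpi; \underline{H^q(\cpi;\z)})$ where $c$ acts by $-1$ on $H^2(\cpi;\z) = \z\langle x\rangle$ and hence by $(-1)^k$ on $H^{2k}$. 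So the columns alternate between the untwisted coefficients $\z$ (giving $H^*(\rpi;\z)$: $\z, 0, \z/2, 0, \z/2,\dots$) for $q \equiv 0 \pmod 4$ and the twisted $\z_-$ (giving $H^*(\rpi;\z_-)$: $0, \z/2, 0, \z/2,\dots$) for $q \equiv 2 \pmod 4$. The section kills the transgression of $x$, so again the sequence degenerates, and assembling the diagonals through total degree $6$ yields exactly $\z/2, \z/2, \z/2, \z\oplus\z/2, (\z/2)^2, (\z/2)^2$. I would double-check these against the mod-$2$ ranks from (1) via the universal coefficient and Bockstein sequences (every torsion class here is $2$-torsion, consistent with $H_*(P;\z/2)$ ranks $1,1,2,2,3,3,4$).

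Finally, part (3): $H_*(P;\z_-)$. Here I would use the twisted Serre spectral sequence $E_2^{p,q} = H^p(\rpi; \underline{H^q(\cpi;\z_-)})$, so the coefficient twist on the base is $\z_-$ composed with the $(-1)^k$ twist coming from $c$ on $H^{2k}(\cpi)$; the net twist on the $q=0$ column is $\z_-$ and on the $q=2$ column is $\z_+$, swapping the roles from part (2). Collapsing again (section), the total-degree-$5$ and total-degree-$6$ diagonals give $H_5(P;\z_-) \cong \z/2$ and $H_6(P;\z_-) \cong \z \oplus (\z/2)^2$. For the statement that $H_5(P;\z_-) \to H_5(P;\z/2)$ is injective with image dual to $t^3x$, I would trace the single surviving $\z/2$ in total degree $5$ to the $E_\infty$-position $(p,q)=(3,2)$, i.e. $t^3 \otimes x$, and compare with the reduction-of-coefficients map $\z_- \to \z/2$, which on spectral sequences is induced by the identity in that bidegree; injectivity then follows because the class is not a boundary. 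The main obstacle I expect is bookkeeping the two interacting sign twists (the $\z[\z/2]$-module structure on $\pi_2$ and conjugation on $H^*(\cpi)$) correctly in each of the three coefficient systems, and verifying that degeneration really does hold in the twisted cases — the section argument is cleanest for $\z/2$ coefficients, and for the integral/twisted versions I would instead argue that no nonzero differential can emanate from $x$ because its mod-$2$ reduction survives, so by naturality the integral differential lands in a torsion group that injects into the mod-$2$ page where it is already zero.
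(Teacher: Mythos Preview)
Your approach to parts (1) and (2) is essentially the paper's: run the Serre spectral sequence for $\cpi \to P \to \rpi$, use the section to force degeneration, then translate cohomology into integral homology via the universal coefficient theorem. One slip: the parenthetical ``a section splits $H^*(P) \to H^*(\cpi)$ surjectively'' is backwards --- a section $\sigma\colon \rpi \to P$ splits $\pi^*\colon H^*(\rpi) \to H^*(P)$, so it is the \emph{base} row $E_2^{*,0}$ that is forced to survive; the vanishing of $d_3(x)$ then follows because its target $t^3$ is a permanent cycle. Your naturality argument (mod-$2$ degeneration plus injectivity of the torsion targets under reduction) is the right way to propagate degeneration to the integral and twisted spectral sequences, and it does go through once one checks that each $\z/2$ appearing as a potential target injects into the corresponding mod-$2$ group.

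For part (3) you diverge from the paper. You compute $H_*(P;\z_-)$ by running the Serre spectral sequence with the composite twist and reading off the $(3,2)$ position; the paper instead obtains $H_*(P;\z_-)$ from the Bockstein-type sequences for $\z_+ \to \z[\z/2] \to \z_-$ and $\z_- \to \z[\z/2] \to \z_+$, proves injectivity of $H_5(P;\z_-) \to H_5(P;\z/2)$ from the coefficient sequence $\z_- \xrightarrow{2} \z_- \to \z/2$, and then identifies the image by mapping in the concrete manifold $Z_1 = S(\eta_3 \oplus 2\underline{\mathbb R})$ and comparing the fibrations $S^2 \to Z_1 \to \rp^3$ and $\cpi \to P \to \rpi$. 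Your filtration argument shows the image lies in $F_3 \smallsetminus F_2$ of $H_5(P;\z/2)$ and hence pairs nontrivially with $t^3x$; this is exactly what is needed downstream, though it does not by itself rule out an additional $tx^2$-component. The paper's geometric argument via $Z_1$ gives a concrete representative and is what actually gets reused later (e.g.\ in verifying the bordism invariant $\langle t^3x, f_*[X]\rangle$); your spectral-sequence bookkeeping is cleaner but less directly connected to those applications.
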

\begin{proof}
We apply the Serre spectral sequence for $H^*(-;\z/2)$
 to the fibration $\cpi \to P \to \rpi$. Since
there is a section $\sigma \colon \rpi \to P$, all $E^2_{p,0}$-terms
survive to infinity. By the multiplicative structure of the spectral
sequence, this implies that $H^*(M;\z/2) \cong \z/2[t,x]$. Using the
Serre spectral sequence for $H_*(-;\z)$ and the universal
coefficient theorem one computes $H_*(M;\z)$ in low dimensions.
Using the Bockstein sequence associated to the short exact sequences
$\z_+ \to \z[\z/2] \to \z_-$ and  $\z_- \to \z[\z/2] \to \z_+$, one
computes $H_*(M;\z_-)$ in low dimensions.

The long exact sequence associated to the coefficient sequence
$\z_- \stackrel{\cdot 2}{\rightarrow} \z_- \to \z/2$ shows that
$H_5(P;\z_-) \to H_5(P;\z/2)$ is injective. Let $Z_1$ be the
$5$-manifold defined in \S 1, we have a commutative diagram
$$\xymatrix{S^2 \ar@{^{(}->}[d] \ar[r] & Z_1 \ar[d]^{f_2} \ar[r] & \rp^3 \ar@{^{(}->}[d]\\
           \cpi \ar[r]       &  P   \ar[r]    & \rpi
           }$$
where $f_2 \colon Z_1 \to P$ is the second stage Postnikov map. By
comparison of the Serre spectral sequences of the two fibrations, we
see that $f_{2*} \colon H_5(Z_1;\z/2) \to H_5(P;\z/2)$ is injective
and the image is dual to $t^3x$. Therefore from the commutative
diagram
$$\xymatrix{H_5(Z_1;\z_-) \ar[d]^{f_{2*}} \ar[r] & H_5(Z_1;\z/2) \ar[d]^{f_{2*}}\\
           H_5(P;\z_-) \ar[r]                & H_5(P;\z/2)
           }$$
we conclude that the image of $H_5(P;\z_-) \to H_5(P;\z/2)$ is dual
to $t^3x$;
\end{proof}

If $M$ has $\pi_2(M) \cong \z_-$, then there is an exact sequence
(cf.~\cite{Brown})
$$H_3(\z/2) \to \z \otimes_{\z[\z/2]} \z_- \to H_2(M) \to H_2(\z/2).$$
Thus $H_2(M)$ is either trivial or isomorphic to $\z/2$. We see from
Lemma \ref{lemma:hmlg q} and Lemma \ref{lemma:hmlg p} that these two
cases correspond to different Postnikov towers.

\begin{corollary}\label{cor:postnikov}
Let $P_2(M)$ be the second space of the Postnikov tower of $M$. Then
$P_2(M) =P $ if and only if $H_2(M) \cong \z/2$; $P_2(M)=Q$ if and
only if $H_2(M)=0$.
\end{corollary}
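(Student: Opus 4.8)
The plan is to reduce everything to a comparison of $H_2(M)$ with $H_2(P_2(M))$. The second-stage Postnikov map $f \colon M \to P_2(M)$ induces isomorphisms on $\pi_1$ and $\pi_2$, and $\pi_i(P_2(M)) = 0$ for $i \ge 3$; hence $f$ is $3$-connected, i.e.\ the homotopy fibre $F$ of $f$ is $2$-connected. Since $F$ is in particular simply connected, $\widetilde H_i(F;\z) = 0$ for $i \le 2$, so the homology Serre spectral sequence of $F \to M \to P_2(M)$ (with the relevant local coefficients on the base) has $E^2_{p,q} = 0$ for $1 \le q \le 2$; a routine check of the differentials in low total degree then gives $f_* \colon H_i(M;\z) \xrightarrow{\ \cong\ } H_i(P_2(M);\z)$ for $i \le 2$. (Equivalently, one invokes the relative Hurewicz theorem for the $3$-connected pair $(M_f, M)$.) In particular $H_2(M) \cong H_2(P_2(M))$.

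Now combine this with the structural facts already in hand. Under the hypothesis $\pi_2(M) \cong \z_-$, the discussion at the start of this subsection shows that the $k$-invariant of $P_2(M)$ lies in $H^3(\rpi; \underline{\pi_2(\cpi)}) \cong \z/2$, so $P_2(M)$ is fibre homotopy equivalent to exactly one of $P$ or $Q$. Lemma \ref{lemma:hmlg p} gives $H_2(P;\z) \cong \z/2$ and Lemma \ref{lemma:hmlg q} gives $H_2(Q;\z) = 0$. Together with the isomorphism $H_2(M) \cong H_2(P_2(M))$ this yields the implications $P_2(M) = P \Rightarrow H_2(M) \cong \z/2$ and $P_2(M) = Q \Rightarrow H_2(M) = 0$. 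Finally, the exact sequence quoted just before the corollary shows that $H_2(M)$ can only be $0$ or $\z/2$; since these two values are realised precisely by $P_2(M) = Q$ and $P_2(M) = P$, and since these exhaust the possibilities for $P_2(M)$, the two implications are in fact equivalences, which is the assertion of the corollary.

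I do not expect a genuine obstacle here. The only delicate point is justifying that $f$ induces an isomorphism on $H_2$ in the presence of the nontrivial fundamental group $\z/2$; this is exactly why one wants the homotopy fibre to be $2$-connected (hence simply connected), so that no local-coefficient subtleties intervene below degree $3$.
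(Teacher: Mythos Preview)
Your proof is correct and follows the same approach as the paper: the paper simply remarks that the two possible values of $H_2(M)$ (coming from the exact sequence you cite) match the values of $H_2(P)$ and $H_2(Q)$ computed in Lemmas~\ref{lemma:hmlg q} and~\ref{lemma:hmlg p}, implicitly using that the $3$-connected Postnikov map $f\colon M\to P_2(M)$ induces an isomorphism on $H_2$. You have just made that last step explicit via the connectivity of the homotopy fibre.
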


\begin{lemma}\label{lemma:orientable}
Let $M^5$ be the quotient space of a free involution on $S^2 \times
S^3$ with $\pi_2(M) \cong \z_+$. Then $M$ is orientable.
\end{lemma}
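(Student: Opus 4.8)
The plan is to show that the orientation character $w_1(M) \colon \pi_1(M) \to \{\pm 1\}$ is trivial by tracking the action of the deck transformation on a generator of $H_5(\widetilde M; \z)$. Write $\widetilde M = S^2 \times S^3$ and let $T$ be the free involution with $M = \widetilde M/T$. Since $\pi_2(M) \cong \z_+$, the induced map $T_* $ on $\pi_2(\widetilde M) \cong H_2(S^2 \times S^3) \cong \z$ is the identity; by Poincar\'e duality on $S^2\times S^3$ (or by the K\"unneth decomposition, noting $H_3(S^2\times S^3)\cong \z$ is generated by the fundamental class of the $S^3$-factor), $T_*$ on $H_3(\widetilde M)\cong \z$ is also the identity. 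Then $T_*$ on $H_5(\widetilde M;\z)\cong \z$ is the product of these two actions, hence the identity, so $T$ is orientation preserving and the quotient $M$ is orientable.

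Concretely, I would run this as follows. First, identify the $\z[\z/2]$-module structure: $H_2(\widetilde M)\cong \pi_2(\widetilde M) = \pi_2(M)$ by the Hurewicz theorem, and the hypothesis $\pi_2(M)\cong \z_+$ says exactly that $T_*=\mathrm{id}$ here. Second, use the cup product pairing $H^2(\widetilde M;\z)\otimes H^3(\widetilde M;\z)\to H^5(\widetilde M;\z)\cong\z$, which is a perfect pairing of infinite cyclic groups and is $T^*$-equivariant; since $T^*$ is the identity on $H^2$ and acts by $\pm 1$ on $H^3$ and $H^5$, equivariance of the pairing forces the sign on $H^5$ to equal the sign on $H^3$. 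Third, determine the sign on $H^3$: the K\"unneth splitting $H^3(S^2\times S^3)\cong H^3(S^3)\oplus\big(H^2(S^2)\otimes H^1(S^3)\big) = H^3(S^3)$ is natural, and one only needs that $T^*$ preserves it; combined with the fact that $T^*$ is the identity on the $S^2$-cohomology, I would argue that the sign on $H^3(S^3)$ matches a sign already pinned down — in fact one can also see it from the $S^2$-side: the pairing $H^2\otimes H^3\to H^5$ already shows the three signs multiply appropriately, and since the $H^2$ sign is $+1$, the $H^3$ and $H^5$ signs coincide, and neither can independently be forced, so I instead close the loop using $w_2$-free topology is not needed — rather, I use that $T^*$ fixes $H^2$ together with the ring structure $H^*(S^2\times S^3;\z) = \z[a,b]/(a^2, b^2)$ with $|a|=2$, $|b|=3$: then $T^*(b) = \epsilon b$ for some $\epsilon=\pm1$ and $T^*(ab)=\epsilon\, ab$, so the orientation sign is $\epsilon$; finally $\epsilon = +1$ because $T^*$ must send the Hurewicz-detected $\pi_3$-class to itself up to the action, and on $\pi_3(S^2\times S^3)\cong\z\oplus\z$ the summand coming from $S^3$ maps to the $S^3$-summand — here the cleanest justification is that $T$ is homotopic to a based map inducing the identity on $\pi_2$, hence on $\pi_3(S^2) = \z$ (the Hopf summand, determined by $\pi_2$), so $T_*$ preserves the $S^3$-generator of $\pi_3$ up to sign, and that sign is forced to be $+1$ because it must be compatible with the $+1$ action on the Hopf summand under the Whitehead product structure.

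The main obstacle is pinning down the sign $\epsilon$ on $H^3(S^3)$ from the hypothesis on $\pi_2$ alone; the cup-product/Poincar\'e-duality argument only tells us the orientation sign equals $\epsilon$, so one genuinely needs an independent reason that $\epsilon=+1$. I expect the right tool here is the structure of $\pi_3(S^2\times S^3)\cong\z\{\iota_2\circ\eta\}\oplus\z\{\iota_3\}$ as a $\z/2$-module: since $T$ acts as $+1$ on $\pi_2$ it acts as $+1$ on the $\eta$-summand, and the remaining summand is intrinsically characterized (e.g.\ as the kernel of the Hurewicz-type map detecting the $S^2$-factor, or via the Hopf invariant), so $T_*$ must carry $\iota_3$ to $\pm\iota_3$; ruling out $-1$ then reduces to observing that an orientation-reversing free involution on $S^2\times S^3$ acting trivially on $H_2$ would give a nonorientable quotient with $\pi_2\cong\z_+$ whose total Betti picture (via the Wang sequence for $\widetilde M\to M$, or a transfer argument) is inconsistent — alternatively, and perhaps most simply, one notes that $w_1(M)\in H^1(M;\z/2)\cong\z/2$ is detected by the first Stiefel--Whitney class, and the Wang/Gysin sequence of the double cover shows $w_1(M)$ restricts to the action sign on $H^3(\widetilde M)$; a direct chain-level or spectral-sequence computation of $H_1(M;\z)$ using $\pi_2(M)=\z_+$ (as in the exact sequence $H_3(\z/2)\to \z\otimes_{\z[\z/2]}\z_+\to H_2(M)\to H_2(\z/2)$ and its continuation) then yields $H_1(M;\z)\cong\z/2$ and, crucially, that the local coefficient system $\z_{w_1}$ must be $\z_+$, i.e.\ $w_1=0$. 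I would write up whichever of these is shortest once the module bookkeeping of \S 2A is in hand.
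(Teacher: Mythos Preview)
Your reduction is correct: from $\pi_2(M)\cong\z_+$ and the cup-product pairing $H^2\otimes H^3\to H^5$ on $S^2\times S^3$, the orientation sign of $T$ equals its sign $\epsilon$ on $H_3(\widetilde M)$. But your attempts to determine $\epsilon$ directly do not close the gap. The Whitehead-product step shows only that $T_*$ fixes the Hopf class $\eta'$ in $\pi_3(S^2\times S^3)$ (since $[\iota_2',\iota_2']=2\eta'$ and $T_*\iota_2'=\iota_2'$); it says nothing about the $\iota_3$-summand, whose Hurewicz image is exactly the $H_3$ on which $\epsilon$ lives. The $\iota_3$-summand is \emph{not} the kernel of any natural map you control (the Hurewicz kernel is $\langle\eta'\rangle$, not $\langle\iota_3\rangle$), and there is no Hopf-invariant constraint here. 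Indeed, nothing in the homotopy or cohomology of $S^2\times S^3$ alone forbids a self-map acting by $+1$ on $H_2$ and $-1$ on $H_3$ (take $\mathrm{id}\times r$ with $r$ a reflection of $S^3$); the obstruction has to come from \emph{freeness} of $T$, which your $\pi_3$-argument never uses.

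The paper takes exactly the route you mention last and most vaguely. Assume $M$ nonorientable, so $\epsilon=-1$ and $H_3(\widetilde M)\cong\z_-$ as a $\z[\z/2]$-module. In the Serre spectral sequence of $\widetilde M\to M\to\rpi$ with integral coefficients, the rows $q=0,2$ carry $\z_+$ and the rows $q=3,5$ carry $\z_-$; one checks that on the line $p+q=5$ the $E^2$-terms at $(5,0),(3,2),(2,3),(0,5)$ are each $\z/2$, while every $E^2$-term on the adjacent lines $p+q=4$ and $p+q=6$ vanishes. Hence all differentials into and out of the line $p+q=5$ are zero and $H_5(M;\z)\ne 0$, contradicting $H_5=0$ for a closed nonorientable $5$-manifold. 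Replace the $\pi_3$ discussion with this short spectral-sequence check.
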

\begin{proof}
Assume that $M$ is nonorientable, then as a $\z[\z/2]$-module,
$H_3(S^2 \times S^3) \cong \z_-$. An easy calculation with the
homology Serre spectral sequence with $\z$-coefficients for the
fibration $\w M \to M \to \rpi$ shows that $H_5(M;\z) \ne 0$, a
contradiction.
\end{proof}

\begin{lemma}\label{lemma:tors}
Let $M^5$ be a smooth $5$-manifold with $\pi_1(M) \cong \z/2$, $w_1(M)=0$ and $w_2(\w M)=0$. Then the torsion subgroup of $H_2(M;\z)$ has the form tors$H_2(M) \cong B \oplus B$, where $B$ is a finite abelian group.
\end{lemma}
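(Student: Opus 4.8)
The approach is to study the torsion linking pairing of $M$. Since $w_1(M)=0$, $M$ is closed and orientable; choose $[M]\in H_5(M;\z)$. Poincar\'e duality then furnishes a nonsingular pairing
\[
\lambda\colon\operatorname{tors}H_2(M;\z)\times\operatorname{tors}H_2(M;\z)\longrightarrow\mathbb Q/\z ,
\]
which is $(-1)$-symmetric, i.e.\ $\lambda(x,y)=-\lambda(y,x)$, because the intersection form on any orientable coboundary of $M$ is skew in the relevant degree. I claim it suffices to show that $\lambda$ is moreover \emph{even}, meaning $\lambda(x,x)=0$ whenever $2x=0$. Indeed, on the odd-primary part $(-1)$-symmetry already gives $\lambda(x,x)=0$ for all $x$, so $\lambda$ is symplectic there and the group is a sum of hyperbolic pairs $(\z/p^{k})^2$; on the $2$-primary part the classification of nonsingular linking forms on finite $2$-groups writes $\lambda$ as an orthogonal sum of cyclic pieces $\langle a/2^{k}\rangle$ on $\z/2^{k}$ ($a$ odd) and rank-$2$ pieces on $(\z/2^{k})^{2}$; skew-symmetry forces every cyclic piece to have $k=1$, and evenness then excludes those as well, so only rank-$2$ pieces survive. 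In every case the underlying group has the form $B\oplus B$, whence $\operatorname{tors}H_2(M)\cong B\oplus B$.

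So everything comes down to showing $\lambda(x,x)=0$ for $x\in\operatorname{tors}H_2(M;\z)$ with $2x=0$. Let $\widehat x\in\operatorname{tors}H^3(M;\z)$ be the Poincar\'e dual of $x$; as $2\widehat x=0$ we may write $\widehat x=\beta(\bar x)$ for some $\bar x\in H^2(M;\z/2)$, $\beta$ the integral Bockstein. A short computation with Bocksteins (using $\rho\beta=Sq^1$ and the definition of $\lambda$) gives $\lambda(x,x)=\tfrac12\langle\bar x\cup Sq^1\bar x,[M]\rangle$, where $\langle\ ,\ \rangle$ is the $\z/2$-Kronecker pairing. The plan is to evaluate $\langle\bar x\cup Sq^1\bar x,[M]\rangle$ geometrically: by Thom's theorem the mod-$2$ class $\bar x\cap[M]\in H_3(M;\z/2)$ is dual to an embedded closed submanifold $N^3\subset M$ with rank-$2$ normal bundle $\nu$. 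Restricting to $N$ and applying the Wu formula on $N^3$,
\[
\langle\bar x\cup Sq^1\bar x,[M]\rangle=\langle Sq^1(\bar x|_N),[N]\rangle=\langle v_1(N)\cup\bar x|_N,[N]\rangle=\langle w_1(\nu)\cup w_2(\nu),[N]\rangle ,
\]
since $\bar x|_N=w_2(\nu)$ and $w_1(N)=w_1(\nu)$ (as $w_1(M)=0$). Because $\dim N=3$ forces $v_2(N)=w_2(N)+w_1(N)^2=0$, the Whitney sum formula for $TN\oplus\nu\cong TM|_N$ yields $w_2(\nu)=w_2(M)|_N$; substituting and using once more the Wu formula on $N^3$ together with naturality of $Sq^1$ and $Sq^1w_2(M)=w_3(M)$ (again $w_1(M)=0$) gives
\[
\langle\bar x\cup Sq^1\bar x,[M]\rangle=\langle Sq^1\bigl(w_2(M)|_N\bigr),[N]\rangle=\langle w_3(M)|_N,[N]\rangle=\langle w_3(M)\cup\bar x,[M]\rangle .
\]

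It remains to invoke the hypothesis $w_2(\w M)=0$. The double cover $\w M\to M$ is classified by the nonzero class $t\in H^1(M;\z/2)$, and the Gysin sequence identifies $\ker\bigl(H^2(M;\z/2)\to H^2(\w M;\z/2)\bigr)$ with $t\cdot H^1(M;\z/2)=\{0,t^2\}$; since $w_2(M)$ restricts to $w_2(\w M)=0$ we conclude $w_2(M)\in\{0,t^2\}$, hence $w_3(M)=Sq^1w_2(M)=0$ (as $Sq^1(t^2)=0$). Therefore $\langle\bar x\cup Sq^1\bar x,[M]\rangle=0$, so $\lambda(x,x)=0$ and the lemma follows. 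The main obstacle is the middle paragraph: producing the dual submanifold $N^3$ by Thom realisation and carrying out the two Wu-formula computations (on $N^3$ and on its normal bundle) to reach $\langle\bar x\cup Sq^1\bar x,[M]\rangle=\langle w_3(M)\cup\bar x,[M]\rangle$; the structure theorem for nonsingular skew linking forms quoted in the first paragraph is the other ingredient that must be cited with care.
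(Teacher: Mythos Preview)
Your proof is correct and takes a genuinely different route from the paper.

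The paper's proof is geometric and indirect: it performs surgery on an embedded circle representing the nontrivial element of $\pi_1(M)$ to obtain a simply-connected $5$-manifold $N$ with $H_2(N)\cong H_2(M)$, checks (using \cite[Lemma 2.1]{Ham-Su}) that the hypothesis $w_2(\w M)=0$ forces $w_2(N)=0$, and then invokes Smale's classification of simply-connected spin $5$-manifolds to conclude that $\mathrm{tors}\,H_2(N)\cong B\oplus B$.

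Your approach instead works directly with the torsion linking form on $M$ and shows it is even by a Wu-class computation, ultimately reducing to $w_3(M)=0$. In effect you are carrying out Smale's linking-form argument intrinsically on $M$ rather than first passing to a simply-connected manifold; the role of the hypothesis $w_2(\w M)=0$ is the same in both proofs (it pins down $w_2(M)\in\{0,t^2\}$), but you exploit it via $Sq^1$ whereas the paper exploits it via the universal coefficient diagram for $M_0\subset N$. Your argument is more self-contained (no appeal to Smale's theorem or to \cite{Ham-Su}), at the cost of needing Thom's mod-$2$ realisation theorem and a careful invocation of the classification of nonsingular skew linking forms on finite abelian groups. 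On that last point, your phrasing leans on the \emph{symmetric} classification and then restricts; it would be cleaner to cite directly that every nonsingular skew linking form on a finite abelian group splits as an orthogonal sum of copies of $(\z/2,\langle 1/2\rangle)$ and hyperbolic planes $H(\z/p^k)$ (Wall), from which your ``evenness on $2$-torsion'' immediately kills the $(\z/2,\langle 1/2\rangle)$ summands and yields $G\cong B\oplus B$.
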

\begin{proof}
Let $S^1 \hookrightarrow M$ be a representative of the nontrivial loop in $M$, $S^1 \times D^4 \subset M$ be its tubular neighborhood, $M_0=M-S^1 \times D^4$. Let $N^5$ be the result of surgery on this embedded $S^1$. Then it's easy to see that $\pi_1(M_0) \cong \z/2$, $\pi_1(N)=0$, $H_2(M) \cong H_2(M_0) \cong H_2(N)$, and $w_2(\w{M_0})=0$. 

There is a commutative diagram
$$\xymatrix{
\mathrm{Ext}(H_1(M_0), \z/2) \ar[r] & H^2(M_0; \z/2) \ar[r] & \mathrm{Hom}(H_2(M_0), \z/2) \ar[r] & 0 \\
 0 \ar[r] \ar[u] &  H^2(N; \z/2) \ar[r] \ar[u]^{i^*} & \mathrm{Hom}(H_2(N), \z/2) \ar[r] \ar[u]^{\cong} & 0 }$$
It's shown in \cite[Lemma 2.1]{Ham-Su} that $w_2(\w{M_0})=0$ implies $w_2(M_0) \in \mathrm{Ext}(H_1(M_0), \z/2)$. Note that $i^*(w_2(N))=w_2(M_0)$. This shows $w_2(N)=0$. Then by the classification of simply-connected spin $5$-manifolds \cite{Smale}, tors$H_2(N) \cong B \oplus B$.
\end{proof}

\begin{corollary}\label{coro:q-type}
Let $M^5$ be the quotient space of an orientation preserving free
involution on $S^2 \times S^3$ with $\pi_2(M) \cong \z_-$. Then
$H_2(M;\z)=0$. Therefore by Corollary \ref{cor:postnikov},
$P_2(M)=Q$.
\end{corollary}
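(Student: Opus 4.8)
The plan is to combine Lemma \ref{lemma:tors} with the constraint on $H_2(M)$ coming from the Postnikov-tower analysis that precedes Corollary \ref{cor:postnikov}. Recall that for $M$ with $\pi_2(M)\cong\z_-$, the exact sequence $H_3(\z/2)\to\z\otimes_{\z[\z/2]}\z_-\to H_2(M)\to H_2(\z/2)$ forces $H_2(M)$ to be either $0$ or $\z/2$; and by Corollary \ref{cor:postnikov} the first case is exactly $P_2(M)=Q$, the second $P_2(M)=P$. So it suffices to rule out $H_2(M)\cong\z/2$.

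First I would check that the hypotheses of Lemma \ref{lemma:tors} apply: $M$ is an orientation preserving quotient of a free involution on $S^2\times S^3$, so $w_1(M)=0$, and $\w M\cong S^2\times S^3$ has $w_2(\w M)=0$ since $S^2\times S^3$ is spin (it is stably parallelizable). Hence Lemma \ref{lemma:tors} gives $\mathrm{tors}\,H_2(M)\cong B\oplus B$ for some finite abelian group $B$. In particular the torsion subgroup of $H_2(M)$ has order a perfect square, so it cannot be $\z/2$. Since the full group $H_2(M)$ is $0$ or $\z/2$ and in either case equals its own torsion subgroup, we conclude $H_2(M)=0$, and then Corollary \ref{cor:postnikov} yields $P_2(M)=Q$.

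The main obstacle, such as it is, is simply confirming the $w_2(\w M)=0$ input to Lemma \ref{lemma:tors}: one must know that $S^2\times S^3$ is spin. This is immediate because $TS^2\oplus\underline{\mathbb R}$ and $TS^3$ are trivial, so $T(S^2\times S^3)$ is stably trivial and in particular $w_2=0$. Everything else is a short bookkeeping argument chaining together results already established in the excerpt, so no genuinely new computation is needed.
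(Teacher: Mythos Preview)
Your proposal is correct and follows essentially the same approach as the paper: establish that $H_2(M)\in\{0,\z/2\}$ and then invoke Lemma \ref{lemma:tors} to rule out $\z/2$. The only cosmetic difference is that the paper phrases the dichotomy as coming from the Serre spectral sequence of $\w M\to M\to\rpi$, whereas you cite the exact sequence preceding Corollary \ref{cor:postnikov}; these amount to the same computation.
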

\begin{proof}
It's seen from the Serre spectral sequence for the fibration $\w M \to M \to \rpi$ that $H_2(M)=0$ or $H_2(M) \cong \z/2$. But the latter is excluded by the above lemma.
\end{proof}

\begin{lemma}\label{lemma:spin}
Let $M^5$ be the quotient space of a free
involution on $S^2 \times S^3$ with $P_2(M)=P$. Then $M$ is
nonorientable and $w_2(M)=0$.
\end{lemma}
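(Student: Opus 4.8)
The plan is to treat the two conclusions separately. Nonorientability is immediate. By the discussion in \S2A, $P$ occurs as $P_2(M)$ only when $\pi_2(M)\cong\z_-$ (if $\pi_2(M)\cong\z_+$ then $P_2(M)=\rpi\times\cpi\neq P$), so $\pi_2(M)\cong\z_-$. Were $M$ orientable, Corollary \ref{coro:q-type} would give $P_2(M)=Q$, contradicting $P\neq Q$ (they are distinguished by $H_2$, cf.\ Lemmas \ref{lemma:hmlg q} and \ref{lemma:hmlg p}); alternatively, orientability together with $w_2(\w M)=0$ would force $\mathrm{tors}\,H_2(M)\cong B\oplus B$ by Lemma \ref{lemma:tors}, which is impossible since $H_2(M)\cong\z/2$ by Corollary \ref{cor:postnikov}. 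Hence $w_1(M)$ is the generator of $H^1(M;\z/2)\cong\z/2$.

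For $w_2(M)=0$ I would argue cohomologically. The second Postnikov map $f_2\colon M\to P$ is $3$-connected, and since $\dim_{\z/2}H^3(M;\z/2)=\dim_{\z/2}H_2(M;\z/2)=2$ (mod-$2$ Poincar\'e duality together with Corollary \ref{cor:postnikov}), $f_2^*$ identifies $H^{\le 3}(M;\z/2)$ with the corresponding part of $H^*(P;\z/2)=\z/2[t,x]$ (Lemma \ref{lemma:hmlg p}(1)). Write $\bar t=f_2^*t=w_1(M)$, and let $\bar x\in H^2(M;\z/2)$ be the image of $x$, so $H^2(M;\z/2)=\langle\bar t^2,\bar x\rangle$, $H^3(M;\z/2)=\langle\bar t^3,\bar t\bar x\rangle$, and $\bar t^3\neq 0$ (hence $\bar t^2\neq 0$). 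Restricting to the spin manifold $\w M=S^2\times S^3$ kills $\bar t^2$ and sends $\bar x$ to a generator of $H^2(\w M;\z/2)$, so $w_2(M)=\alpha\,\bar t^2$ for some $\alpha\in\z/2$. By the Wu formula $w_2(M)=w_1(M)^2+v_2$, whence $v_2=(\alpha+1)\bar t^2$; as $\bar t^2\neq 0$ it is enough to prove $v_2\neq 0$, equivalently that $Sq^2\colon H^3(M;\z/2)\to H^5(M;\z/2)\cong\z/2$ is onto.

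To compute $Sq^2$ on the basis $\{\bar t^3,\bar t\bar x\}$: first $Sq^2\bar t^3=\bar t^5$, and Wu's formula gives $\langle\bar t^5,[M]\rangle=\langle v_1\cdot v_1^4,[M]\rangle=\langle Sq^1(v_1^4),[M]\rangle=0$ (since $Sq^1$ of a square vanishes), so $\bar t^5=0$ and $Sq^2\bar t^3=0$. For the other generator I would use that $x\in H^2(P;\z/2)$ lifts to a $\z_-$-twisted integral class $\tilde x\in H^2(P;\z_-)$: complex conjugation on $\cpi$ acts by $-1$ on $H^2(\cpi;\z)$, so $c_1$ defines an equivariant class with values in $\z_-$, and the section $\sigma\colon\rpi\to P$ forces the only possibly obstructing differential (the $d_3$ into the bottom row of the Cartan--Leray spectral sequence of $\cpi\to P\to\rpi$) to vanish, so this class survives. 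Pulling back, $\bar x$ may be taken to be the reduction of a class $\bar{\tilde x}\in H^2(M;\z_-)$. Two consequences follow: (i) $\bar x^2$ is the reduction of $\bar{\tilde x}^2\in H^4(M;\z_-\otimes\z_-)=H^4(M;\z)$, and a Cartan--Leray computation (using $\pi_2(M)\cong\z_-$) gives $H_3(M;\z)\cong H^2(M;\z_-)\cong\z$ and $H_4(M;\z)$ finite, hence $H^4(M;\z)=0$ and $\bar x^2=0$; (ii) the twisted-Bockstein identity $Sq^1(\rho_2 y)=w_1(M)\cup\rho_2 y$ for reductions of $\z_-$-classes gives $Sq^1\bar x=\bar t\bar x$. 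By the Cartan formula $Sq^2(\bar t\bar x)=\bar t\,\bar x^2+\bar t^2\,Sq^1\bar x+(Sq^2\bar t)\bar x=\bar t^3\bar x$. Finally, mod-$2$ Poincar\'e duality makes the pairing $H^3(M;\z/2)\otimes H^2(M;\z/2)\to H^5(M;\z/2)\cong\z/2$ nondegenerate; since $\bar t^3\neq 0$ while $\langle\bar t^3\cdot\bar t^2,[M]\rangle=\langle\bar t^5,[M]\rangle=0$ and $\{\bar t^2,\bar x\}$ is a basis, the functional $\langle\bar t^3\cup-,[M]\rangle$ is nonzero on $\bar x$, i.e.\ $\langle\bar t^3\bar x,[M]\rangle=1$. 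Thus $Sq^2(\bar t\bar x)\neq 0$, so $v_2\neq 0$, $\alpha=0$, and $w_2(M)=0$.

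The Wu-formula bookkeeping is routine. The main obstacle I expect is to establish the $\z_-$-lift of $x$ on $P$ (this is where the section $\sigma$ is essential) and the ensuing identity $Sq^1\bar x=\bar t\bar x$, and the spectral-sequence computation that $H^4(M;\z)=0$ (equivalently $\bar x^2=0$); once these are in hand the rest is formal.
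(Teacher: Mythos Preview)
Your proof is correct, and the overall architecture---Wu-class bookkeeping reducing $w_2(M)=0$ to $v_2\neq 0$, then computing $Sq^2$ on $H^3(M;\z/2)$---coincides with the paper's. The difference lies in how $Sq^1\bar x$ (and implicitly $\bar x^2$) is pinned down. The paper observes that since $f_2^*$ is an isomorphism on $H^2$ and $H^3$, the operation $Sq^1$ on $x$ is universal, determined already in $P$; it then evaluates everything by plugging in the concrete model $Z_1=S(\eta_3\oplus 2\underline{\mathbb R})$, for which $w_2(Z_1)=0$ can be checked directly, and reads off $Sq^1 x=tx$ and $v_2=t^2$ from that single example. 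You instead lift $x$ to $H^2(P;\z_-)$ (this works because the $k$-invariant of $P$, which is the relevant transgression, vanishes) and invoke the twisted-Bockstein identity $\rho_2\beta_-=Sq^1+w\cup$ to get $Sq^1\bar x=\bar t\bar x$, together with $H^4(M;\z)\cong H_1(M;\z_-)=0$ to force $\bar x^2=0$. The paper's ``check one example'' trick is shorter once a model manifold is in hand, but it writes $Sq^2(tx)=t^2\,Sq^1 x$ without explicitly addressing the Cartan term $t\cdot x^2$; your argument fills that gap cleanly. Conversely, your route is heavier on machinery (twisted Bocksteins, the twisted Poincar\'e-duality computation of $H^4(M;\z)$), which you rightly flag as the main obstacles. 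Your Poincar\'e-duality argument that $\bar t^3\bar x\neq 0$ (via $\bar t^5=0$ from Wu and nondegeneracy of the cup pairing) is also a pleasant alternative to the paper's spectral-sequence identification of the generator of $H^5$.
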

\begin{proof}
That $M$ is nonorientable is a consequence of the previous lemma. To
show that $w_2(M)=0$, consider the cohomology Serre spectral
sequence with $\z/2$-coefficients for the fibration $\w M \to M \to
\rpi$. $H^2(M;\z/2) \cong (\z/2)^2$ implies that the differentials
$d_3 \colon E^3_{p,2} \to E^3_{p+3,0}$  are trivial. Therefore
$H^2(M;\z/2)=\z/2 <t^2, x>$ and $H^3(M;\z/2)=\z/2 <t^3, tx>$, where
$t$ is the pull-back of the nontrivial element of $H^1(\rpi;\z/2)$
and $x$ restricts to the nontrivial element of $H^2(\w M;\z/2)$. By
Poincar\'e duality, $H^5(M;\z/2)$ is generated by $t^3x$.

Let $v=v_1+v_2$ denote the Wu class of $M$. Then $v_1=w_1(M)=t$,
$w_2(M)=(Sq v)_2=v_1^2+v_2=t^2+v_2$, where $v_2$ is determined by
$Sq^2 \colon H^3(M;\z/2) \to H^5(M;\z/2)$. We have $Sq^2 t^3=t^5=0$
and $Sq^2 tx = t^2Sq^1x$.

Let $f_2 \colon M \to P=P_2(M)$ be the Postnikov map, there is a
commutative diagram
$$\xymatrix{H^2(P;\z/2) \ar[d]^{Sq^1} \ar[r]^{f_2^*} & H^2(M;\z/2) \ar[d]^{Sq^1}\\
           H^3(P;\z/2) \ar[r]^{f_2^*}                 & H^3(M;\z/2)
           }$$
by Lemma \ref{lemma:hmlg p} and the structure of $H^3(M;\z/2)$, both
$f_2^*$ are isomorphsims. Therefore we only need to determine
$Sq^1x$ for a special $M$. Consider the manifold $Z_1=S(\eta_3
\oplus 2\underline{\mathbb R})$. It is easy to compute that
$H_2(Z_1)=\z/2$ and $w_2(Z_1)=0$. This implies that $Sq^1x =tx$ and
$v_2=t^2$. Hence $w_2(M)=0$.
\end{proof}

\subsection{Characteristic submanifolds and $\Pin^{\pm}$-structures}
Recall that for a manifold $M^n$ with fundamental group $\z /2$, a
\emph{characteristic submanifold} $N^{n-1} \subset M$ is defined in
the following way (see \cite[\S 5]{Thomas-Geiges}): there is a
decomposition $\w{M}=A \cup TA$ such that $\partial A =\partial TA =
\w{N}$, where $T$ is the deck-transformation. Then $N:= \w{N}/T$ is
called the characteristic submanifold of $M$. For example, if
$M=\rp^n$, then $N=\rp^{n-1}$. In general, let $f\colon M \to \rp^k$
($k$ large) be the classifying map of the universal cover,
transverse to $\rp^{k-1}$, then $N$ can be taken as
$f^{-1}(\rp^{k-1})$. By equivariant surgery we may assume that
$\pi_1(N) \cong \z/2$ and that the inclusion $i\colon  N \subset M$
induces an isomorphism on $\pi_1$.

\smallskip

Recall that there are central extensions of $O(n)$ by $\z /2$ (see
\cite[\S 1]{Kirby-Taylor} and \cite[\S 2]{HKT}):
$$1 \to \mathbb Z/2 \to \Pin^{\pm}(n) \to O(n) \to 1,$$

Let $\dagger \in \{+, - \}$. A $\Pin^{\dagger}$-structure on a
vector bundle $\xi$ over a space $X$ is the fiber homotopy class of
lifts of the classifying map $c_E\colon  X \to BO$.

\begin{lemma}\cite[Lemma 1]{HKT}
\
\begin{enumerate}
\item
A vector bundle $E$ over $X$ admits a $\Pin^{\dagger}$-structure if
and only if
$$\begin{array}{ll}
w _2(E)=0 & \mathrm{for}\ \ \dagger =+, \\
w_ 2(E)=w_ 1(E)^2 & \mathrm{for}\ \ \dagger =-, \\
\end{array}$$

\item
$\Pin^{\pm}$-structures are in bijection with $H^1(X;\z/2)$.
\end{enumerate}
\end{lemma}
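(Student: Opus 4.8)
The plan is to run the standard obstruction theory for the fibration attached to the central extension. Recall that a $\Pin^{\dagger}$-structure on $E$ is by definition a vertical homotopy class of lifts of $c_E \colon X \to BO$ along $B\z/2 \to B\Pin^{\dagger} \to BO$. Since the kernel $\z/2$ of $1 \to \z/2 \to \Pin^{\dagger}(n) \to O(n) \to 1$ is central, hence abelian, the classifying space map $B\Pin^{\dagger} \to BO$ is a principal fibration with fiber $B\z/2 = K(\z/2,1)$, and it is classified by a $k$-invariant $\epsilon_{\dagger} \in H^2(BO;\z/2)$, namely the class of the central extension. By the theory of principal fibrations a lift of $c_E$ exists if and only if $c_E^*(\epsilon_{\dagger}) = 0 \in H^2(X;\z/2)$, and when this holds the set of vertical homotopy classes of lifts is a torsor over $H^1(X;\z/2)$ (via the $B\z/2$-action on the total space). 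This immediately yields part (2) and reduces part (1) to computing $\epsilon_{\dagger}$.

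For the computation, note that $H^2(BO;\z/2)$ is the $\z/2$-vector space spanned by $w_1^2$ and $w_2$, so $\epsilon_{\dagger} = a_{\dagger} w_1^2 + b_{\dagger} w_2$ with $a_{\dagger}, b_{\dagger} \in \z/2$. Restricting along $BSO \hookrightarrow BO$, the extension $\Pin^{\dagger}$ pulls back to $\Spin$, whose class is $w_2|_{BSO}$; since the restriction map sends $w_1^2 \mapsto 0$ and $w_2 \mapsto w_2$, this forces $b_{\dagger} = 1$, i.e.\ $\epsilon_{\dagger} = a_{\dagger}w_1^2 + w_2$. To pin down $a_{\dagger}$, restrict instead along $BO(1) = B\z/2 \hookrightarrow BO$ classifying the tautological line bundle $\gamma_1$ (with $w_1(\gamma_1) = t$ the generator of $H^1(B\z/2;\z/2)$ and $w_2(\gamma_1) = 0$): the pulled-back extension is $\Pin^{\dagger}(1)$, whose class is $a_{\dagger}t^2 \in H^2(B\z/2;\z/2) = \z/2\langle t^2\rangle$. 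Now $\Pin^+(1) = \z/2 \times \z/2$ is the split extension (a reflection lifts to an element squaring to $+1$), so its class vanishes and $a_+ = 0$; while $\Pin^-(1) = \z/4$ is the nonsplit extension (a reflection lifts to an element squaring to the central involution), whose class is the generator $t^2$, so $a_- = 1$.

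Combining these, $\epsilon_+ = w_2$ and $\epsilon_- = w_2 + w_1^2$, so $c_E^*(\epsilon_+) = 0$ reads $w_2(E) = 0$ and $c_E^*(\epsilon_-) = 0$ reads $w_2(E) = w_1(E)^2$, which is assertion (1); assertion (2) was already noted. The routine-but-essential inputs are: (i) that $BG \to BO$ is genuinely a principal $K(\z/2,1)$-fibration whose $k$-invariant is the extension class, which is the standard bar-construction description of central extensions by an abelian group; and (ii) the identification of $\Spin$ as the $w_2$-extension of $SO$ together with the elementary classification of the central extensions of $O(1)$.

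The only point requiring genuine care is matching the obstruction-theoretic output — a torsor over $H^1$ on \emph{vertical} homotopy classes of lifts — with the paper's definition of a $\Pin^{\dagger}$-structure as a fiber homotopy class of lifts; one should also observe that for $n \ge 2$ the degree-two cohomology $H^2(BO(n);\z/2) = \z/2\langle w_1^2, w_2\rangle$ is already in the stable range, so there is no discrepancy between working with $BO(n)$ and with $BO$. I expect this bookkeeping, rather than any substantive difficulty, to be the main thing to get right.
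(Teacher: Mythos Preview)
Your proof is correct and is the standard obstruction-theoretic argument for this fact. The paper itself does not prove this lemma at all: it is stated with a citation to \cite[Lemma 1]{HKT} and no proof is given, so there is nothing to compare against beyond noting that you have supplied what the paper merely quotes.
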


\begin{lemma}\cite[Lemma 9]{Thomas-Geiges}\label{lemma:thom-geig}
Let $M$ be a smooth, orientable $5$-manifold with $\pi_1(M) \cong
\z/2$. Let $N \subset M$ be the characteristic submanifold (with
$\pi_ 1(N)\cong \z/2$). Then $TN$ admits a
$\Pin^{\dagger}$-structure, where
$$\dagger = \left \{ \begin{array}{ll}
- & \textrm{if}\ \ w_2(M)=0  \\
  & \\
+ & \textrm{if}\ \ w_2(M)\ne 0  \ \ \textrm{and} \ \ w_2(\w{M})=0 
\end{array} \right.$$
\end{lemma}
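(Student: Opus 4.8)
The plan is to reduce the assertion to the characteristic-class criterion of Lemma~\cite[Lemma 1]{HKT} by computing the Stiefel--Whitney classes of $TN$ in terms of those of $TM$. Throughout, let $i\colon N\hookrightarrow M$ denote the inclusion; by the surgery normalization recalled in \S 2B it induces an isomorphism on $\pi_1$, hence an isomorphism on $H^1(\,\cdot\,;\z/2)$, and I write $t$ for the generator of both $H^1(N;\z/2)$ and $H^1(M;\z/2)$.

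First I would pin down the normal bundle. Since $N$ may be taken to be $f^{-1}(\rp^{k-1})$ for the classifying map $f\colon M\to\rp^k$ of the universal cover, transverse to $\rp^{k-1}$, the normal bundle $\nu$ of $N$ in $M$ is the pullback under $f$ of the normal bundle of $\rp^{k-1}$ in $\rp^k$, i.e.\ of the tautological line bundle; as $f|_N\colon N\to\rp^{k-1}$ is a $\pi_1$-isomorphism, $\nu$ is the nontrivial real line bundle over $N$, so $w_1(\nu)=t$ and $w_2(\nu)=0$. Feeding $TM|_N\cong TN\oplus\nu$ into the Whitney formula and using $w_1(M)=0$, I obtain $w_1(TN)=t$ in degree one (so $N$ is nonorientable) and, in degree two, $w_2(TN)=i^*w_2(M)+w_1(TN)w_1(\nu)=i^*w_2(M)+t^2$.

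The two cases then follow quickly, modulo one point. If $w_2(M)=0$, then $w_2(TN)=t^2=w_1(TN)^2$, so $TN$ admits a $\Pin^-$-structure by Lemma~\cite[Lemma 1]{HKT}. If $w_2(M)\neq 0$ and $w_2(\w M)=0$, it suffices to show $i^*w_2(M)=t^2$, for then $w_2(TN)=0$ and $TN$ admits a $\Pin^+$-structure. To see this I would use the Gysin sequence of the real line bundle $\mathcal L$ over $M$ with $w_1(\mathcal L)=t$, whose $S^0$-bundle is the double cover $p\colon\w M\to M$: it identifies $\ker\bigl(p^*\colon H^2(M;\z/2)\to H^2(\w M;\z/2)\bigr)$ with $t\smile H^1(M;\z/2)$, which equals $\{0,t^2\}$ since $H^1(M;\z/2)\cong\z/2$. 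Because $p^*TM\cong T\w M$ we have $p^*w_2(M)=w_2(\w M)=0$, so $w_2(M)\in\{0,t^2\}$; as $w_2(M)\neq 0$ this forces $w_2(M)=t^2$, and applying $i^*$ gives $i^*w_2(M)=t^2$, completing the case.

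The only step carrying genuine content is this last one, the identification $w_2(M)=t^2$ under the hypothesis $w_2(\w M)=0$; everything else is routine bookkeeping with the Whitney formula and with the normal bundle of $N$. An alternative there is to quote \cite[Lemma 2.1]{Ham-Su}, according to which $w_2(\w M)=0$ forces $w_2(M)$ to lie in the image of $\mathrm{Ext}(H_1(M),\z/2)$ in $H^2(M;\z/2)$, which for $\pi_1(M)\cong\z/2$ is again the two-element group $\{0,t^2\}$; the same conclusion follows.
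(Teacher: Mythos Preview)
The paper does not give its own proof of this lemma; it simply cites \cite[Lemma 9]{Thomas-Geiges} and moves on. Your argument is correct and is essentially the standard one: identify the normal line bundle of $N$ in $M$ as the pullback of the tautological bundle (so $w_1(\nu)=t$), apply the Whitney sum formula to $TM|_N=TN\oplus\nu$ to get $w_1(TN)=t$ and $w_2(TN)=i^*w_2(M)+t^2$, and then use the Gysin sequence for the double cover to see that $w_2(\w M)=0$ forces $w_2(M)\in\{0,t^2\}$. There is nothing to correct.
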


$N$ admits a pair of $\Pin^{\dagger}$-structures. The
$\Pin^{\dagger}$-bordism class of the pair of structures is
independent of the choice of characteristic submanifold $N$. The
$\Pin^{\dagger}$-bordism groups in low dimensions are computed by
Kirby and Taylor in \cite{Kirby-Taylor}, we have $\Omega_4^{\Pin^+}
\cong \z/16$ generated by $\pm \rp^4$ and $\Omega_4^{\Pin^-}=0$.

\subsection{Construction of manifolds}
In this subsection we give detailed description of the manifolds $\Sigma_q^5/T$ and $X(q)^5$.

The manifolds $\Sig_q/T$  are described in \cite[\S
1]{Thomas-Geiges}: Let $V^6_q$ be the complex hypersurface in
$\mathbb C^4$ given by the equation
$$z_0^q+z_1^2+z_2^2+z_3^2=0,$$
where $q=0,1,\dots, 8$. The origin is an isolated singularity of
$V^6_q$, and the link $\Sig_q$ of this singularity, i.~e., the
intersection of $V^6_q$ with the unit sphere $S^7\subset \mathbb
C^4$, is shown to be diffeomorphic to $S^5$ for $q$ odd and
diffeomorphic to $S^2 \times S^3$ for $q$ even. The involution $T
\colon \Sig_q \to \Sig_q$ given by
$$T(z_0,z_1,z_2,z_3)=(z_0,-z_1,-z_2,-z_3)$$
is free on $\Sig_q$.

Thus for $q=0,2,4,6,8$, we have an orientation preserving free
involution on $S^2 \times S^3$, we denote the  quotient space by
$\Sig_q /T$. 
\begin{lemma}
$H_2(\Sig_q/T)=0$, $w_2(\Sig_q/T) \ne 0$,
the $\Pin^+$-bordism class of the charateteristic submanifold of
$\Sig_q/T$ is $\pm q \in \Omega_4^{\Pin^+} \cong \z/16$. $P_2(\Sig_q/T)=Q$.
\end{lemma}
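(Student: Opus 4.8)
The plan is to verify the four assertions about $\Sigma_q^5/T$ more or less independently, using the explicit Brieskorn description together with the general lemmas already established. For the computation of $H_2$ and $P_2$, I would run the homology Serre spectral sequence for the fibration $S^2 \times S^3 = \widetilde{\Sigma_q^5/T} \to \Sigma_q^5/T \to \rpi$ with $\z$-coefficients. Since the involution $T$ acts by $-1$ on $\pi_2(S^2\times S^3) = H_2(S^2\times S^3)$ (the $S^2$-factor, on which $T$ is essentially the antipodal map composed with a rotation, reverses orientation) we have $\pi_2(\Sigma_q^5/T) \cong \z_-$; orientation preservation gives $w_1 = 0$. Then Corollary \ref{coro:q-type} applies verbatim: an orientation-preserving free involution with $\pi_2 \cong \z_-$ has $H_2 = 0$, hence by Corollary \ref{cor:postnikov} we get $P_2(\Sigma_q^5/T) = Q$. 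So two of the four claims are immediate consequences of what is already proved, once $\pi_2 \cong \z_-$ and orientability are checked. Orientability of $\Sigma_q^5/T$ follows because $T$ preserves the complex orientation of $\Sigma_q^5$ (it is the restriction of a complex-linear, hence orientation-preserving, map of $\mathbb C^4$), so it descends to an orientation of the quotient.

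For $w_2(\Sigma_q^5/T) \neq 0$: by Lemma \ref{lemma:spin} (contrapositive) a quotient with $w_2 = 0$ and orientable would have $P_2 = P$ only in the nonorientable case, so in fact I would argue more directly. Since $M := \Sigma_q^5/T$ is orientable with $\pi_2 \cong \z_-$, if $w_2(M) = 0$ then by Lemma \ref{lemma:thom-geig} the characteristic submanifold $N^4$ carries a $\Pin^-$-structure, and $\Omega_4^{\Pin^-} = 0$; moreover in the $\z_-$, orientable, $w_2 = 0$ case Theorem \ref{thm:one}(2) forces $M$ to be $Y_1$ or $Y_1'$, which have $H_2 \cong \z/2$ — contradicting $H_2(M) = 0$. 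So $w_2(M) \neq 0$. Alternatively and more self-containedly: compute $w_2$ of the tangent bundle of the quotient directly. The link $\Sigma_q^5 \subset S^7$ has stably trivial normal bundle in $S^7$ (it is the boundary of the Milnor fiber), and the $\z/2$-equivariant structure lets one identify $w_2(M)$ with the restriction of a class pulled back from $\rpi$; the point is that for $q$ even the normal $1$-type is the nontrivial one. I expect the cleanest route is the first (deducing $w_2 \neq 0$ from $H_2 = 0$ plus the Postnikov dichotomy and Lemma \ref{lemma:spin}), since Lemma \ref{lemma:spin} already tells us $P_2 = Q$ together with orientability forces the non-spin alternative.

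For the last claim — that the $\Pin^+$-bordism class of the characteristic submanifold $N_q$ of $\Sigma_q^5/T$ equals $\pm q \in \Omega_4^{\Pin^+} \cong \z/16$ — I would follow Thomas–Geiges. The characteristic submanifold is obtained by intersecting $V_q^6 \cap S^7$ with the fixed locus structure of $T$: concretely, $N_q = \{z_0 = 0\} \cap \Sigma_q^5$ is $T$-invariant and $\widetilde{N_q}/T$ is the characteristic submanifold, where $\{z_0=0\}\cap \Sigma_q^5 = \{z_1^2+z_2^2+z_3^2 = 0\}\cap S^5$ with the free involution $(z_1,z_2,z_3)\mapsto(-z_1,-z_2,-z_3)$. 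This is a quadric, and one recognizes its quotient as a standard object (a twisted $\rp^2$- or lens-type bundle) whose $\Pin^+$-bordism class can be read off; the continuous-in-$q$ nature of the family plus a single base-point computation (say $q=2$, giving the generator's multiple) pins down the answer as $\pm q$. The hard part will be this last identification: one must compute the $\Pin^+$-structure(s) on $N_q$ induced from the ambient geometry and evaluate the $\z/16$-valued $\Pin^+$-bordism invariant (essentially an $\eta$- or Brown–Arf-type invariant), which is exactly the delicate computation carried out in \cite[\S 1]{Thomas-Geiges}; I would cite that computation rather than redo it, noting only that the relevant submanifold here coincides with theirs.
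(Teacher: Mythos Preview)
The paper's own proof of this lemma is a one-line citation: the first three assertions ($H_2=0$, $w_2\ne 0$, and the $\Pin^+$-bordism class $\pm q$) are quoted from Geiges--Thomas \cite[Proposition 6, Lemma 11]{Thomas-Geiges}, and then $P_2(\Sig_q/T)=Q$ follows from $H_2=0$ via Corollary~\ref{cor:postnikov}. Your proposal ultimately also defers the $\Pin^+$-computation to \cite{Thomas-Geiges}, so on that point the two agree; and your derivation of $H_2=0$ via Corollary~\ref{coro:q-type} (once $\pi_2\cong\z_-$ and orientability are checked) is a legitimate alternative route, slightly more roundabout than simply citing the source.

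However, your argument for $w_2(\Sig_q/T)\ne 0$ contains a genuine error. Lemma~\ref{lemma:spin} concerns only the case $P_2(M)=P$; it says nothing about manifolds with $P_2(M)=Q$. The implication you claim, that ``$P_2=Q$ together with orientability forces the non-spin alternative,'' is simply false: $Y_1$ is orientable with $P_2(Y_1)=Q$ and $w_2(Y_1)=0$ (this is stated explicitly in \S2C). The backup argument via Theorem~\ref{thm:one}(2) is circular, since this lemma is used in \S3 to compute the bordism groups that feed into the proof of the theorem; and the assertion that $Y_1$, $Y_1'$ have $H_2\cong\z/2$ is wrong (they have $H_2=0$, again from \S2C and Corollary~\ref{cor:postnikov}). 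For $w_2\ne 0$ you really do need the direct computation in \cite{Thomas-Geiges}, or an equivalent hands-on argument.

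Your identification of the characteristic submanifold is also incorrect. The locus $\{z_0=0\}\cap\Sig_q$ is the set $\{z_1^2+z_2^2+z_3^2=0\}\cap S^5$, which is independent of $q$; its quotient by $T$ therefore cannot represent the class $\pm q\in\z/16$ for varying $q$. (Moreover $\{z_0=0\}$ is not the kind of hypersurface that separates $\Sig_q$ into two pieces exchanged by $T$, which is what the definition of characteristic submanifold requires.) The actual characteristic submanifold used by Geiges--Thomas is different and does depend on $q$; since you end by citing their computation anyway, you should drop the incorrect explicit description.
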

\begin{proof}The first statement was shown in \cite[Proposition 6, Lemma
11]{Thomas-Geiges}. Since $H_2(\Sig_q/T)=0$, by
Corollary \ref{cor:postnikov} $P_2(\Sig_q/T)=Q$.
\end{proof}

For $q=1,3,5,7$, the above construction gives a free involution on
$S^5$, the quotient space $\Sig_q /T$ is a fake $\rp^5$ --- a smooth
manifold homotopy equivalent to $\rp^5$ (see \cite[\S 14D]{Wall}).
We denote it by $H\rp^5_q$. Furthermore, it is shown that these
$H\rp^5_q$ are all the possible fake $\rp^5$. The $\Pin^+$-bordism
class of the corresponding characteristic submanifold is $\pm q \in
\Omega_ 4^{\Pin^+}$.

\smallskip

The manifolds $\XX{q}$ are constructed by the ``connected-sum along
$S^1$'' operation on the fake projective spaces $H\rp^5_q$ (see
\cite[\S 3]{Ham-Su}). Denote the trivially oriented $4$-dimensional
real disc bundle over $S^1$ by $E$. Choose embeddings of $E$ into
$\HH{q}$ and $\HH{q'}$, representing a
 generator of $\pi _1$, such that the first embedding preserves
  the orientation and the second reverses it. Then we define
$$\HH{q}\scs \HH{q'} := (\HH{q}-E)\cup _{\partial} (\HH{q'}-E),$$
 Note that $\HH{q}$ admits orientation reversing
automorphisms, thus the construction doesn't depend on the
orientations. (The fact that $H\rp^5_q$ admits orientation reversing
automorphisms follows from that $\rp ^5$ admits orientation
reversing automorphisms and that the action of $Aut(\rp^5)$ on the
structure set $\mathscr{S}(\rp^5)$ is trivial.)

The Seifert-van Kampen theorem implies that $\pi_ 1(\HH{q} \scs
\HH{q'}) \cong \z /2$. The Mayer-Vietoris exact sequence implies
that $$H_ 2(\HH{q} \scs \HH{q'})\cong \z, \ \ \  H_ 2(\HH{q} \scs
\HH{q'};\z[\z/2])\cong \z.$$

Since $\pi_ 1 SO(4) \cong \mathbb Z/2$, there are actually two
possibilities to form $\HH{q} \scs \HH{q'}$. However, note that the
characteristic submanifold of $\HH{q}\scs \HH{q'}$ is $N_ 1 \scs N_
2$, where $N_1$ (resp.~$N_2$) is the characteristic submanifold of
$\HH{q}$ (resp.~$\HH{q'}$). (See \cite[p.651]{HKT} for the
definition of $\scs$ for nonorientable $4$-manifolds with
fundamental group $\z /2$). Therefore if we fix $\Pin^+$-structures
on each of the characteristic submanifolds, then the manifold $H\rp^5_q \scs
H\rp^5_{q'}$ is well-defined.

This construction allows us to construct manifolds with given
bordism class of characteristic submanifold. Note that $N _1\scs N_
2$ corresponds to the addition in the bordism group $\Omega_
4^{\Pin^{\dagger}}$. Now for $q=0,2,4,6,8$, choose $l,l' \in
\{1,3,5,7\}$ and appropriate $\Pin^+$-structures on the
characteristic submanifolds of $H\rp^5_{l}$ and $H\rp^5_{l'}$, we
can form a manifold $H\rp^5_{l} \scs H\rp^5_{l'}$ such that the
$\Pin^+$-bordism class of the  characteristic submanifold is $ \pm q
\in \Omega_ 4^{\Pin^+}$. We denote this manifold by $\XX{q}$. For
example, we can form $\XX{0}=H\rp^5_{1} \scs H\rp^5_{1}$ and
$\XX{2}=H\rp^5_{1} \scs H\rp^5_{1}$ with different glueing maps.

The manifold $\XX{q}$ has fundamental group $\z/2$ and $w_2(X^5(q))
\ne 0$. The universal cover $\w{\XX{q}}$ is a simply-connected
$5$-manifold with trivial second Stiefel-Whitney class and
$H_2(\w{\XX{q}}) \cong \z$. Therefore, by the classification theorem
of Smale \cite{Smale}, $\w{\XX{q}} \cong S^2 \times S^3$. The action
of $\pi_1(X^5(q))$ on $\pi_2(X^5(q))$ is trivial
(cf.~\cite[p.~6--7]{Ham-Su}).

\smallskip

The manifolds $Y_i$ and $Z_j$ are sphere bundles over projective
spaces. Their homology groups and characteristic classes can be
computed by standard methods. Especially we have 
\begin{itemize}
\item $H_2(Y_1)=0$,
$w_1(Y_1)=w_2(Y_1)=0$; 
\item $H_2(Y_2)=0$, $w_1(Y_2) \ne 0$, $w_2(Y_2)=0$;
\item $H_2(Z_1)\cong \z/2$, $w_1(Z_1)\ne 0$, $w_2(Z_1)=0$.
\end{itemize}

\section{Normal $2$-types and Bordism Groups}
\subsection{Modified surgery}
The classification of free involutions on a manifold is equivalent
to the classification of the quotient manifolds. For involutions on
the spheres $S^n$, the quotient spaces are all homotopy equivalent
to $\rp^n$. Therefore the classical surgery program can be applied
to the classification problem of the quotient spaces (\cite[\S
14D]{Wall}).

For free involutions on $S^2 \times S^3$, the homotopy type of the
quotient spaces is not constant. However, the lower Postnikov
system and the normal data of the quotient spaces are relatively
simple. This is appropriate for the settings of the modified surgery
program in \cite{KreckM}, \cite{Kreck}. In this subsection we briefly describe the
modified surgery program. For the sake of simplicity, we only
consider $5$-dimensional manifolds here.

\smallskip

Let $p\colon  B \to BO$ be a fibration. A normal $B$-structure of
$M$ is a lift $\bar{\nu}\colon M^5 \to B$ of the normal Gauss map
$\nu\colon  M \to BO$. $\bar{\nu}$ is called a normal $2$-smoothing
if it is a $3$-equivalence. Manifolds with normal $B$-structures
form a bordism theory. Suppose $(M^5_ i, \bar{\nu_ i})$ ($i=1,2$)
are two normal $2$-smoothings in $B$, $(W^6, \bar{\nu})$ is a
$B$-bordism between $(M^5_ 1, \bar{\nu_ 1})$ and $(M^5_ 2, \bar{\nu
_2})$. Then $W$ is bordant rel.~boundary to an $s$-cobordism
(implying that $M_ 1$ and $M_ 2$ are diffeomorphic) if and only if
an obstruction $\theta (W, \bar{\nu}) \in L _6(\pi _1(B), w_1(B))$
is zero (\cite{KreckM} and \cite[p.730]{Kreck}).

The obstruction group $L_ 6(\pi_1(B), w_1(B))$ is related to the
ordinary Wall's $L$-group in the following exact sequence
$$0 \to L_ 6^s(\pi_ 1(B), w_1(B)) \to L _6(\pi_ 1(B), w_1(B)) \to \Wh(\pi_ 1(B)),$$
where $L_ 6^s(\pi_ 1(B), w_1(B))$ is the Wall's $L$-group and
$\Wh(\pi_ 1(B))$ is the Whitehead group of $\pi_ 1(B)$. For $\pi_
1(B)=\z/2$, $\Wh(\z/2)=0$ (\cite{Milnor}), therefore we have an
isomorphism $L_ 6^s(\pi_ 1(B), w_1(B)) \cong L_ 6(\pi_ 1(B),
w_1(B))$. According to \cite[\S 13A]{Wall},  $L_
6^s((\z/2)^{\pm}) \cong \z/2$, detected by the Kervaire-Arf invariant. And
if $\theta (W, \bar{\nu})$ is nonzero, then one can do surgery on
$(W, \bar{\nu})$ such that the result manifold $(W', \bar{\nu}')$
has trivial surgery obstruction. Therefore we have the following

\begin{proposition}\label{prop:one}
Two smooth $5$-manifolds $M_1$ and $M_2$ with fundamental group
$\z/2$ are diffeomorphic if they have bordant normal $2$-smoothings
in some fibration $B$.
\end{proposition}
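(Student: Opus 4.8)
The plan is to upgrade the hypothesized $B$-bordism to an $s$-cobordism and then invoke the $s$-cobordism theorem. By assumption each $M_i$ carries a normal $2$-smoothing $\bar\nu_i\colon M_i\to B$ (a $3$-equivalence) and $[M_1,\bar\nu_1]=[M_2,\bar\nu_2]$ in the $B$-bordism group, so we may fix a $B$-bordism $(W^6,\bar\nu)$ between them. The first step is to improve $W$: because $\dim W=6$ and each end is already a $3$-equivalence to $B$, one may surger the interior of $W$ below the middle dimension $3$ --- killing $\pi_1$ and then $\pi_2$ of the pair $(W,B)$ relative to the ends --- so as to arrange that $\bar\nu\colon W\to B$ is itself a $3$-equivalence. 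This is the routine ``surgery below the middle dimension'' half of the program, and it goes through because $n=5\ge 5$, $\pi_1(B)$ is finite, and $\pi_2(B)$ is finitely generated.

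Once $\bar\nu\colon W\to B$ is a $3$-equivalence, the obstruction $\theta(W,\bar\nu)\in L_6(\pi_1(B),w_1(B))$ is defined, and, as recalled above, $W$ is bordant rel boundary to an $s$-cobordism precisely when $\theta(W,\bar\nu)=0$. Since $\pi_1(B)\cong\z/2$ we have $\Wh(\z/2)=0$, whence $L_6(\pi_1(B),w_1(B))\cong L_6^s((\z/2)^{\pm})\cong\z/2$, the nonzero class being detected by the ordinary Kervaire--Arf invariant. If $\theta(W,\bar\nu)=0$ we continue directly; otherwise, since that nonzero class is the image of the Arf generator of $L_6(\z)\cong\z/2$, it is realized by a degree-one normal map on a closed, $2$-connected $6$-manifold $V$ with stably trivial normal bundle --- which therefore lifts to a $B$-manifold --- and forming the interior connected sum $W\#V$ adds that Arf invariant to $\theta(W,\bar\nu)$ while leaving $\partial W$ and the $3$-equivalence property intact. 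This is exactly the assertion, recalled above, that a nonzero $\theta$ can be removed by interior surgery, so we may assume $\theta(W,\bar\nu)=0$.

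With $\theta(W,\bar\nu)=0$, Kreck's theorem supplies a bordism rel boundary from $W$ to an $s$-cobordism $W'$ between $M_1$ and $M_2$; since $\dim W'=6\ge 6$ the $s$-cobordism theorem gives a diffeomorphism $M_1\cong M_2$, as claimed. I expect the only genuinely delicate point to be the removability of the $L_6$-obstruction: it is precisely the conjunction of $\Wh(\z/2)=0$ (collapsing $L_6$ onto $L_6^s$), the computation $L_6^s((\z/2)^{\pm})\cong\z/2$, and the fact that this $\z/2$ is detected by the \emph{simply-connected} Arf invariant --- hence realized by a simply-connected closed $6$-manifold and so killed by an interior modification disjoint from $\partial W$ --- that forces every such bordism to be promotable to an $s$-cobordism. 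For a general fundamental group one could kill only the image of $L_6(\z)$ inside $L_{n+1}(\z[\pi_1])$, so this rigidity is particular to $\pi_1\cong\z/2$; by contrast the preliminary ``surgery below the middle dimension'' step is entirely standard.
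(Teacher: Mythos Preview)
Your argument is correct and follows essentially the same route as the paper. The paper's justification (given in the paragraph immediately preceding the proposition) cites Kreck's theorem for the obstruction $\theta(W,\bar\nu)\in L_6(\pi_1(B),w_1(B))$, uses $\Wh(\z/2)=0$ to identify this with $L_6^s((\z/2)^{\pm})\cong\z/2$ detected by the Kervaire--Arf invariant, and then remarks that a nonzero obstruction can be removed by surgery on $W$; you have simply made this last step explicit by naming the interior connected sum with a simply-connected Kervaire $6$-manifold. One minor comment: in Kreck's framework the obstruction $\theta(W,\bar\nu)$ is already defined for any $B$-bordism between normal $2$-smoothings, so your preliminary ``surgery below the middle dimension'' paragraph is part of the proof of Kreck's theorem rather than a separate step you need to perform, but including it does no harm.
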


The fibration $B$ is called the normal $2$-type of $M$ if $p$ is
$3$-coconnected. This is an invariant of $M$. By this proposition,
the solution to the classification problem consists of two steps:
first we need to determine the normal $2$-types $B$ for the
$5$-manifolds under consideration. The second step is to compute the
corresponding bordism groups $\Omega _5^{(B,p)}$.

\subsection{Normal $2$-types}
In this subsection we determine the normal $2$-types of the quotient
manifolds of smooth free involutions on $S^2 \times S^3$.

\smallskip

Let $p_2 \colon B\Spin \to BO$ be the canonical projection, $\oplus
\colon BO \times BO \to BO$ be the $H$-space structure on $BO$
induced by the Whitney sum of vector bundles. Let $\eta$ be the
canonical real line bundle over $\rp^{\infty}$. Let $\pi \colon
P_2(M) \to \rpi$ be the projection in the Postnikov tower, $L=\pi^*
\eta$ be the pull-back line bundle and $kL$ be the Whitney sum of
$k$ copies of $L$. Let $f_2 \colon M \to P_2(M)$ be the second stage
Postnikov map.

\smallskip

Consider the fibration
$$p\colon  B=P_2(M) \times B\Spin \stackrel{p _1 \times p_
2}{\longrightarrow} BO \times BO \stackrel{\oplus}{\longrightarrow}
BO,$$ where $p_ 1\colon  P_2(M) \to BO$ is the classifying map of
$kL$ with
$$k = \left \{ \begin{array}{ll}
0 & \textrm{if}\ \ w_1(M)=w_2(M)=0  \\
1 & \textrm{if}\ \ w_1(M) \ne 0, \ \  w_2(M) \ne 0 \\
2 & \textrm{if}\ \ w_1(M)=0, \ \ w_2(M) \ne 0 \\
3 & \textrm{if}\ \ w_1(M) \ne 0, \ \ w_2(M)=0 \\
\end{array} \right.$$

A straightforward calculation shows that $w_1(\nu M
-f_2^*kL)=w_2(\nu M -f_2^*kL)=0$. This implies that $\nu M -
f_2^*kL$ admits a $\Spin$-structure. Such a structure induces a map
$M \to B\Spin$. Together with $f_2$ we have a lift $\bar{\nu}$ of
$\nu$. It is easy to see that $(B,p)$ is the normal $2$-type of $M$
and $\bar{\nu}$ is a normal $2$-smoothing. (Compare \cite[\S 5A]{Ham-Su})

\subsection{Computation of $\Omega_5^{(B,p)}$}
To apply Proposition \ref{prop:one}, according to the normal
$2$-types given above and Lemma \ref{lemma:orientable},
Corollary \ref{coro:q-type}, Lemma \ref{lemma:spin}, we need to compute the
following bordism groups $\Omega_5^{(B,p)}$:
\begin{itemize}
\item $\OS_5(\rpi \times \cpi;kL)$, $k=0,2$;

\

\item $\OS_5(Q;kL)$,  $k=0,1,2,3$;

\

\item $\OS_5(P;3L)$.
\end{itemize}

\

\noindent \underline{Computation of $\OS_5(Q;kL)$:}

\medskip

We apply the Atiyah-Hirzebruch spectral sequence and (implicitly) the Adams
spectral sequence to compute the bordism groups $\OS_5(Q;kL)\cong
\widetilde{\Omega}^{\Spin}_{5+k}(\mathrm{Th}(kL))$.

For $k=1,3$, the Atiyah-Hirzebruch spectral sequence has
$E^2_{p,q}=H_p(Q;\underline{\OS_q})$ (here we use the Thom
isomorphism to identify $\w H_{p+k}(\mathrm{Th}(kL);\OS_q)$ and
$H_p(Q;\underline{\OS_q})$). Using Lemma \ref{lemma:hmlg q} the
relevant terms and differentials in the spectral sequence are
depicted as follows:
\begin{center}
\def\sseqgridstyle{\ssgridgo}
\sseqentrysize=.6cm
\def\sseqpacking{\sspackhorizontal}
\begin{sseq}{7}{7}
\ssmoveto 0 5  \ssdrop{\cdot} \ssdashedline {5}{-5}
 \ssdrop{\cdot}

\ssmoveto 6 0 \ssdrop{\circ} \ssarrow {-2}{1} \ssdrop{\bullet}

 \ssarrow {-2}{1} \ssdrop{\bullet}

 \ssmoveto 4 1 \ssarrow {-4} {3} \ssdrop{\bullet}
\end{sseq}
\end{center}

Each black dot denotes a copy of $\z/2$ and the circle denotes a
copy of $\z$.

\begin{enumerate}
\item
$k=1$: the differential $d_2 \colon E^2_{4,1} \to E^2_{2,2}$ is
dual to $Sq^2+t\cdot Sq^1$, $d_2 \colon E^2_{6,0} \to E^2_{4,1}$ is
reduction mod $2$ composed with the dual of $Sq^2+t\cdot Sq^1$ (cf.~
\cite{Teichner}). From Lemma \ref{lemma:hmlg q}, both are trivial.
Hence $\OS_5(Q;L)$ is either trivial or isomorphic to $\z/2$. By
comparison with the Adams spectral sequence for
$\OS_5(Q;L)=\pi_5(TL\wedge M\Spin)$, Olbermann showed in
\cite{Olbermann} that $\OS_5(Q;L)=0$.

\item
$k=3$: the differential $d_2 \colon E^2_{6,0} \to E^2_{4,1}$ is
reduction mod $2$ composed with the dual of $Sq^2+t\cdot
Sq^1+t^2\cdot$. It is shown in \cite{Olbermann} that $Sq^1
q=Sq^2q=0$ (using the fact the $Q \simeq \cp^{\infty}/\tau$ and
there is a fiber bundle $\rp^2 \to \cp^{\infty}/\tau \to \mathbb H
\mathrm P^{\infty}$). Therefore $Sq^2q+t\cdot Sq^1q+t^2\cdot q=t^2q$
is the nontrivial element in $H^6(Q;\z/2)$, and hence $d_2$ is
surjective. Therefore $\OS_5(Q;3L)=0$.
\end{enumerate}

For $k=0,2$, the Atiyah-Hirzebruch spectral sequence has
$E^2_{p,q}=H_p(Q; \OS_q)$. Using Lemma \ref{lemma:hmlg q} the
relevant terms and differentials of the spectral sequence are
depicted as follows:

\begin{center}
\def\sseqgridstyle{\ssgridgo}
\sseqentrysize=.6cm
\def\sseqpacking{\sspackhorizontal}
\begin{sseq}{7}{7}
\ssmoveto 0 5  \ssdrop{\cdot} \ssdashedline {5}{-5}
  \ssdrop{\bullet} \ssarrow {-3}{2} \ssdrop{\bullet}

 \ssmoveto 4 1 \ssdrop{\bullet} \ssarrow {-2}{1}

 \ssmoveto 5 1 \ssdrop{\bullet} \ssarrow {-4}{3} \ssdrop{\bullet}

 \ssmoveto 4 2 \ssdrop{\bullet} \ssarrow {-3}{2} \ssmoveto 0 5
\end{sseq}
\end{center}

Each black dot denotes a copy of $\z/2$. $d_2 \colon E^2_{4,1} \to
E^2_{2,2}$ is dual to $Sq^2$ for $k=0$ and to $Sq^2+t^2 \cdot$ for
$k=2$. In both cases $d_2$ is trivial.

For $k=2$, by Lemma \ref{lemma:thom-geig} there is a homomorphism
(see also \cite[p.~18]{Ham-Su})
$$\phi \colon \OS_5(Q;2L) \to \Omega_4^{\Pin^+}$$
sending a singular manifold $[X,f]$ to the bordism class of its
characteristic submanifold. We know that $\Omega_4^{\Pin^+}$ is
isomorphism to $\z/16$ and the characteristic submanifold of
$\Sig_q/T$ ($q=0,2,4,6,8$) corresponds to $\pm q \in
\Omega_4^{\Pin^+}$. It is seen from the spectral sequence that
$\OS_5(Q;2L)$ has at most $8$ elements. Therefore $\phi$ is
injective and $\OS_5(Q;2L) \cong \z/8$, determined by the
characteristic submanifold. The action of the group of fiber homotopy equivalences $Aut(B \stackrel{p}{\to} BO)$ on $\OS_{5}(Q; 2L)$ has the effect of multiplication by $\pm 1$. This can be seen from the action of $Aut(B \stackrel{p}{\to} BO)$ on the $E^{2}$-page of the Atiyah-Hirzebruch spectral sequence.

The $k=0$ case is more subtle. First consider the long exact sequence for the pair $(DL, SL)$, where $DL$ is the disk bundle of $L$ and $SL$ the sphere bundle:
$$\OS_6(DL, SL) \to \OS_5(SL) \to \OS_5(DL) \to \OS_5(DL, SL) \to \OS_4(SL).$$
Note that we have $SL \simeq \cpi$, $DL \simeq Q$ and $\OS_*(DL, SL) \cong \OS_{*-1}(Q;L)$ by Thom isomorphism. From \cite{Olbermann} we know that $\OS_5(Q;L)=0$ and $\OS_4(Q;L)\cong \z/2$. An easy calculation by the Atiyah-Hirzebruch spectral sequence gives $\OS_4(\cpi) \cong \z \oplus \z$. Therefore we have $\OS_5(Q) \cong \z/2$. By looking at the Atiyah-Hirzebruch spectral sequence depicted above we know that the nontrivial element comes from position $(4,1)$. Also note that since $\OS_{5}(Q) \cong \z/2$, the action of the group of fiber homotopy equivalences $Aut(\mathrm{BSpin} \times Q \to \mathrm{BO})$  on $\OS_{5}(Q)$ is trivial. 

A generator for this bordism group is given as follows: first note that there is an embedding  
$$\cp^{1} \times \cp^{1} \hookrightarrow \cp^{3}, \ \ ([z_{0}, z_{1}], [w_{0}, w_{1}]) \mapsto [z_{0}w_{0}, z_{1}w_{1}, z_{0}w_{1}, z_{1}w_{0}]$$
which is compatible with the involutions $([z_{0}, z_{1}], [w_{0}, w_{1}]) \mapsto ([\overline{z_{1}}, \overline{z_{0}}], [-\overline{w_{1}}, \overline{w_{0}}])$ on $\cp^{1} \times \cp^{1}$ and $[\zeta_{0}, \zeta_{1}, \zeta_{2}, \zeta_{3} ] \mapsto [-\overline{\zeta_{1}}, \overline{\zeta_{0}}, -\overline{\zeta_{3}}, \overline{\zeta_{2}} ]$ on $\cp^{3}$, and the image of this embedding is a complex surface of  
degree $2$
$$V(2) \colon \zeta_0 \zeta_1 - \zeta_2 \zeta_3=0.$$ 
This gives a singular manifold
$$f \colon V(2)/\tau \to \cp^{3}/\tau \to \cpi/\tau \simeq Q$$
Since $i_* \colon H_4(\cpi) \to H_4(Q)$ is an isomorphism, $f_* \colon H_4(V(2)/\tau) \to H_4(\cpi/\tau)$ is also an isomorphism. $V(2)/\tau$ is the sphere bundle of the vector bundle $\eta \oplus 2 \underline {\mathbb R}$ over $\rp^{2}$, which is spin (c.~f.~\cite[Remark 4.5]{Ham-Kr}. This implies that the edge homomorphism $\OS_4(Q) \to H_4(Q)$ is an surjection. Since the Atiyah-Hirzebruch spectral sequence is a module over the coefficient ring $\OS_*$, we see that the nontrivial element in $\OS_5(Q)$ is represented by 
$$V(2)/\tau \times S^1 \stackrel{pr_1}{\longrightarrow} V(2)/\tau \to \cpi/\tau \simeq Q,$$ 
where $S^1$ is given the nontrivial spin structure. 

A bordism invariant detecting the nontrivial element is given as follows: first it's easy to see that the map $Q \simeq \cp^{\infty}/\tau \to
\mathbb H \mathrm P ^{\infty}$ induces an isomorphism $\OS_5(Q)
\stackrel{\cong}{\longrightarrow} \OS_5(\mathbb H \mathrm P
^{\infty})=\OS_5(S^4)$, which is given as follows: for a singular manifold $f \colon X^{5} \to Q$, by cellular approximation, we can find a map $g \colon X^{5} \to S^{4}$ such that the following diagram commutes up to homotopy
$$\xymatrix{
X \ar[d]_{g} \ar[r]^{f} & Q \ar[d]^{pr} \\
S^{4} \ar[r]^{i} & \mathbb H \mathrm P^{\infty}\\
}$$
Then the  bordism class $[X,g]\in \OS_{5}(S^{4})$ is the image of $[X, f] \in \OS_{5}(Q)$. 
Let $s_0 \in S^4$ be a  point, assuming that $g \colon X \to S^{4}$ is  transverse to $s_0$, then $g^{-1}(s_0)$ is a
$1$-manifold with spin structure, whose bordism class in $\OS_{1} \cong \z/2$ is an invariant of $[X,f] \in \OS_{5}(Q)$.  Comparing with the definition of a characteristic submanifold in \S 2B, we call $g^{-1}(s_{0})$  the $1$-dimensional characteristic submanifold of $[X,f]$.

\

\noindent \underline{Computation of $\OS_5(P;3L)$:}

\medskip

The $E^2$-terms of the Atiyah-Hirzebruch spectral sequence are
$E^2_{p.q}=H_p(P; \underline{\OS_q})$, the differential $d_2 \colon
E^2_{p,1} \to E^2_{p-2,2}$ is dual to $Sq^2+t\cdot Sq^1 +t^2\cdot$,
$d_2 \colon E^2_{p,0} \to E^2_{p-2,1}$ is reduction mod $2$ composed
with the dual of $Sq^2+t\cdot Sq^1+t^2\cdot$. The image of the mod
$2$ reduction $H_6(P;\z_-) \to H_6(P;\z/2)$ contains an element $a$
such that $\langle t^6, a \rangle =1$ and an element $b$ such that
$\langle t^2x^2, b \rangle =1$. (Using similar method in the proof
of Lemma \ref{lemma:hmlg p}, $a$ is obtain from the section $\sigma
\colon \rpi \to P$ and $b$ is obtained from the commutative diagram
of fibrations
$$\xymatrix{\cp^2 \ar[d] \ar[r] & V^6 \ar[d] \ar[r] & \rp^2 \ar[d]\\
           \cpi \ar[r]       &  P   \ar[r]    & \rpi
           }$$
where $V^6 = (\cp^2 \times S^2)/(c,-1)$.)

Having this information, a standard calculation shows that all terms
in the line $p+q=5$ are killed except for $E^2_{5,0} \cong
H_5(P;\z_-)$. Therefore the edge homomorphism $\OS_5(P;3L) \to
H_5(P;\z_-)$ is an isomorphism, $\OS_5(P;3L) \cong \z/2$, and a
bordism class $[X,f]$ is determined by the number $\langle t^3x,
f_*[X]_{\z/2} \rangle$.

\medskip

The groups $\OS_5(\rpi \times \cpi;kL)$, $k=0,2$ were computed in
\cite[Propostion 5.3, Proposition 5.6]{Ham-Su}.

\smallskip

We summarize the above computations in:

\begin{proposition} \label{prop:two} \
\begin{enumerate}
\item
$\Omega_ 5^{\Spin}(\rp^{\infty} \times \cp^{\infty}) \cong \z/4$. A
generating bordism class $[X^5,f]$ is detected by the invariant
$\langle  t^3 x, f_ *[X] \rangle \in \z/2$, the two generating
bordism classes are interchanged if we compose $f$ with the
involution $\tau$ on $\cp^{\infty}$.

\

\item There is a short exact sequence
$$0 \to \z/4 \to \Omega _5^{\Spin}(\rp^{\infty}
\times \cp^{\infty};2L) \to \Omega _4^{\Pin^+} \to 0,$$ the bordism
classes $[X,f] \in \{\pm 1\} \subset \z/4$ are detected by the
invariant $\langle t^3 x , f_
*[X] \rangle \in \z/2$, and they are interchanged if we compose $f$ with the
involution $\tau$ on $\cp^{\infty}$.

\

\item
$\OS_5(Q) \cong \z/2$, detected by the Spin-bordism class of the $1$-dimensional characteristic submanifold. The action of $Aut(\mathrm{BSpin} \times Q \to \mathrm{BO})$ on $\OS_{5}(Q)$ is trivial.

\

\item 
$\OS_5(Q;L) =
\OS_5(Q;3L)=0$. 

\

\item
 $\OS_2(Q;2L) \cong \z/8$ is determined by the
characteristic submanifold. The action of $Aut(B \stackrel{p}{\to} \mathrm{BO})$ on $\OS_{5}(Q;2L)$ is multiplication by $\pm 1$.

\

\item
$\OS_5(P;3L) \cong \z/2$, a bordism class $[X,f]$ is determined by
the bordism number $\langle t^3x, f_*[X]_{\z/2} \rangle$.
\end{enumerate}
\end{proposition}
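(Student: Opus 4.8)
The plan is to assemble the six assertions from the Atiyah--Hirzebruch spectral sequence (AHSS) computations already carried out above, organized by the base space ($\rpi \times \cpi$, $Q$, or $P$) and the twisting bundle $kL$, and to supplement the spectral sequence by auxiliary long exact sequences and explicit geometric generators wherever the AHSS alone leaves an ambiguity. Parts (1) and (2) I would quote directly from \cite[Proposition 5.3, Proposition 5.6]{Ham-Su}; the detection by the characteristic number $\langle t^3 x, f_*[X] \rangle$ and the interchange of the two relevant bordism classes under postcomposition with the self-map $\tau$ of $\cpi$ follow from the multiplicative structure of the AHSS together with naturality of the edge homomorphism, using that $\tau$ acts by $-1$ on $H_2(\cpi)$ and fixes $H_4(\cpi)$.

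For (3) and (4) the input is the homology of $Q$ from Lemma \ref{lemma:hmlg q} together with the description of the $d_2$ differentials as (mod $2$ reductions of) duals of $Sq^2 + t\cdot Sq^1 + t^2\cdot$, following \cite{Teichner}. For the twists $k=1,3$ the relevant $d_2$'s either vanish or, when $k=3$, are surjective onto $E^2_{4,1}$ --- the surjectivity resting on $Sq^1 q = Sq^2 q = 0$, which itself comes from the $\rp^2$-bundle structure $\rp^2 \to \cpi/\tau \to \mathbb H \mathrm P^{\infty}$ established in \S 2A --- so $\OS_5(Q;3L)=0$, and the complementary vanishing $\OS_5(Q;L)=0$ is Olbermann's, proved by comparison with the Adams spectral sequence; this gives (4). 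For (3), the AHSS leaves $\OS_5(Q)$ of order at most $2$; I would then run the long exact sequence of the pair $(DL,SL)$, using $SL\simeq\cpi$, $DL\simeq Q$, the Thom isomorphism $\OS_*(DL,SL)\cong\OS_{*-1}(Q;L)$, and the values $\OS_5(Q;L)=0$, $\OS_4(Q;L)\cong\z/2$, $\OS_4(\cpi)\cong\z\oplus\z$ to force $\OS_5(Q)\cong\z/2$. An explicit generator is $V(2)/\tau\times S^1\stackrel{pr_1}{\longrightarrow}V(2)/\tau\to Q$ with $S^1$ carrying the nontrivial spin structure; to name a computable invariant I would use the isomorphism $\OS_5(Q)\stackrel{\cong}{\longrightarrow}\OS_5(\mathbb H \mathrm P^{\infty})=\OS_5(S^4)$ and take the spin-bordism class in $\OS_1\cong\z/2$ of a transverse preimage of a point, i.e.\ the $1$-dimensional characteristic submanifold. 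Triviality of the $Aut(B\to BO)$-action here is immediate because $\z/2$ has no nontrivial automorphisms.

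For (5), the AHSS over $Q$ with twist $2L$ bounds $|\OS_5(Q;2L)|\le 8$; since the $\Pin^+$-characteristic-submanifold homomorphism $\phi\colon\OS_5(Q;2L)\to\Omega_4^{\Pin^+}\cong\z/16$ of Lemma \ref{lemma:thom-geig} has image containing the classes $\pm q$ of the manifolds $\Sig_q/T$ for $q=0,2,4,6,8$, it must be injective with image $\z/8$, whence $\OS_5(Q;2L)\cong\z/8$; the $Aut(B\to BO)$-action is multiplication by $\pm1$, read off from its action on the $E^2$-page. For (6), I would run the AHSS over $P$ from Lemma \ref{lemma:hmlg p}, with $d_2$ again dual to $Sq^2+t\cdot Sq^1+t^2\cdot$ and with the mod $2$ reduction of $H_6(P;\z_-)$ containing classes dual to $t^6$ and to $t^2x^2$ --- the latter obtained from the pullback square of fibrations over $\rp^2$ with total space $V^6=(\cp^2\times S^2)/(c,-1)$. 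Tracking which entries on the antidiagonal $p+q=5$ are hit, all die except $E^2_{5,0}\cong H_5(P;\z_-)\cong\z/2$, so the edge map $\OS_5(P;3L)\to H_5(P;\z_-)$ is an isomorphism and $[X,f]$ is detected by $\langle t^3x, f_*[X]_{\z/2}\rangle$.

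The genuine obstacle I expect is part (3): the AHSS alone cannot decide whether $\OS_5(Q)$ is $0$ or $\z/2$, so one needs both the auxiliary exactness argument via $(DL,SL)$ and an explicit geometric generator whose chosen bordism invariant --- the $1$-dimensional characteristic submanifold --- can actually be evaluated. A secondary delicate point is the injectivity of $\phi$ in (5), which is available only because one has the five manifolds $\Sig_q/T$ realizing eight distinct values of the $\Pin^+$-bordism invariant.
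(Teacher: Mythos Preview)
Your proposal is correct and follows essentially the same approach as the paper: parts (1)--(2) are quoted from \cite{Ham-Su}; parts (4), (5), (6) are read off the AHSS with the $d_2$ identified as dual to $Sq^2 + t\cdot Sq^1 + t^2\cdot$, supplemented for (5) by the characteristic-submanifold homomorphism $\phi$ and the manifolds $\Sig_q/T$; and part (3) is obtained from the long exact sequence of the pair $(DL,SL)$ together with Olbermann's values for $\OS_*(Q;L)$, with the explicit generator $V(2)/\tau \times S^1$ and the $1$-dimensional characteristic submanifold as detecting invariant.

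One small slip: you say ``the AHSS leaves $\OS_5(Q)$ of order at most $2$'', but on the $E^2$-page for $k=0$ the antidiagonal $p+q=5$ carries three copies of $\z/2$ (at $(5,0)$, $(4,1)$, $(1,4)$), so without further differential analysis the AHSS alone gives only $|\OS_5(Q)|\le 8$. The paper does not claim such a bound from the AHSS; it determines $\OS_5(Q)\cong\z/2$ directly from the $(DL,SL)$ long exact sequence (implicitly using $\OS_5(\cpi)=0$, which follows since $d_2$ dual to $Sq^2$ kills $E^2_{4,1}$ for $\cpi$), and then uses the AHSS only to locate the surviving class at position $(4,1)$. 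Since you also run the long exact sequence, your argument still goes through --- just drop or weaken the parenthetical AHSS bound for $k=0$.
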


\section{Proof of the theorem}

Now we are ready to prove Theorem \ref{thm:one}. This is a direct consequence of the classification machinery 
Proposition \ref{prop:one} and the computation results in Proposition\ref{prop:two}, except for the $(P_2(M)=Q, w_{1}(M)=0, w_{2}(M)=0)$-case, which needs more careful analysis.

\begin{proof}[Proof of Theorem \ref{thm:one}]
The $\pi_2(M) \cong Z_+$ case was shown in \cite[Theorem 3.1,
Theorem 3.6]{Ham-Su}. Here we only need to consider the $\pi_2(M)
\cong \z_-$ cases.

For $w_1(M) \ne 0$ and $P_2(M)=Q$, since the corresponding bordism group
$\Omega_5^{(B,p)}=0$, there is only one diffeomorphism type, which is given in the table: $Y_2$ and $S^3 \times \rp^2$ respectively.

For $w_1(M) \ne 0$ and $P_2(M)=P$, the bordism group is
$\Omega_5^{(B,p)}=\OS_5(P;3L) \cong \z/2$. For a normal
$2$-smoothing $[M, \bar{\nu}]$, the bordism number $\langle t^3x,
\bar{\nu}_*[M] \rangle $ must be $1$. Therefore there is only
one diffeomorphism type, which is represented by $Z_1$.

For $w_1(M)=0$ (and therefore $P_2(M)=Q$), if $w_2(M) \ne 0$, the corresponding bordism group
$\Omega_5^{(B,p)}=\OS_5(Q;2L) \cong \z/8$. Each bordism class $q$ is
represented by a normal $2$-smoothing $[\Sig_q/T, \bar{\nu}]$, where $q$ and $-q$ are interchanged by the action of $Aut(B \stackrel{p}{\to} \mathrm{BO})$.
Therefore  $M$  is diffeomorphic to a unique $\Sig_q/T$, $q= 0,2,4,6,8$.
 
 \medskip
 
Now we turn to the case where $P_2(M)=Q$, $w_{1}(M)=0$, and $w_{2}(M)=0$. The bordism group is $\OS_{5}(Q) \cong \z/2$. Our aim is to show each bordism class is represented by a normal $2$-smoothing, therefore there are two diffeomorphism classes in this case.  For this we need the following two lemmas.

\begin{lemma} \label{lem:3.3}
$[Y_{1}, f]=0 \in \OS_{5}(Q)$, where $f \colon Y_{1} \to Q$ is the second stage Postnikov map. 
\end{lemma}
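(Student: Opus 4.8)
The plan is to exhibit a map $g\colon Y_1 \to S^4$ through which the second stage Postnikov map factors (up to the projection $Q \simeq \cpi/\tau \to \mathbb H \mathrm P^\infty$ and the inclusion $S^4 \hookrightarrow \mathbb H \mathrm P^\infty$), and then to compute the resulting invariant in $\OS_5(S^4)$ directly from the geometry of $Y_1$. Recall from \S 3C that $\OS_5(Q) \cong \OS_5(\mathbb H \mathrm P^\infty) = \OS_5(S^4) \cong \z/2$, and that the invariant of a singular manifold $[X^5,f]$ is the $\Spin$-bordism class of the $1$-dimensional characteristic submanifold $g^{-1}(s_0) \in \OS_1 \cong \z/2$, where $g$ is any cellular approximation lifting $pr \circ f$ to $S^4$.

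First I would describe $Y_1$ concretely. Since $Y_1 = S(\eta_2 \oplus 3\underline{\mathbb R})$ is the sphere bundle of a rank-$4$ bundle over $\rp^2$, it fibers as $S^3 \to Y_1 \to \rp^2$. Composing the classifying data with the Postnikov map, I would identify a factorization $Y_1 \to \rp^2 \to \mathbb H \mathrm P^\infty$; the point is that the relevant map to $\mathbb H \mathrm P^\infty$ (equivalently, to $BSU(2) = BS^3$, classifying the quaternionic line that controls the Postnikov invariant of $Q$) is pulled back from the $2$-dimensional base $\rp^2$. Concretely, a cellular representative $g\colon Y_1 \to S^4$ of the composite $pr \circ f$ then factors through the $4$-skeleton issue: since it comes from a map defined on (something built from) $\rp^2$, which is $2$-dimensional, I can arrange that $g$ misses a generic point $s_0 \in S^4$ entirely, so $g^{-1}(s_0) = \emptyset$.

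The cleanest way to make this rigorous is probably the homological one: the invariant of $[Y_1,f]$ lives in $\OS_5(Q) \cong \z/2$, and since $\OS_4(Q) \to H_4(Q) \cong \z$ is surjective (shown in \S 3C via $V(2)/\tau$) and the Atiyah--Hirzebruch spectral sequence for $\OS_*(Q)$ is a module over $\OS_*$, the nonzero element of $\OS_5(Q)$ is detected by the nontriviality of the image of $f_*[Y_1]_{\z/2}$ under the appropriate slant/evaluation against the class coming from position $(4,1)$ and the nontrivial spin structure on an embedded circle. So I would instead verify that the image of the fundamental class: compute $f_*[Y_1] \in H_5(Q;\z_-)$ (which is $\z/2$ by Lemma \ref{lemma:hmlg q}(3)) or, better, compute whether the $1$-dimensional characteristic submanifold carries the nontrivial or trivial spin structure, by intersecting with the submanifold $Q' \subset Q$ corresponding to $\mathbb H \mathrm P^\infty \supset S^4 \supset S^0$ and reading off the normal framing. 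Because everything descends from $\rp^2$, dimension reasons force $g^{-1}(s_0)$ to be null-bordant (indeed empty for a generic choice).

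I expect the main obstacle to be the bookkeeping needed to pin down \emph{which} factorization of the Postnikov map through $\mathbb H \mathrm P^\infty$ is the right one and to be sure the identification $\OS_5(Q) \cong \OS_5(S^4)$ is compatible with the geometric description of $Y_1$ as a sphere bundle — i.e., checking that the quaternionic-line-bundle data on $Q$ restricts, along $f$, to something classified by a map $Y_1 \to \rp^2 \to \mathbb H \mathrm P^\infty$ rather than acquiring a contribution from the $S^3$-fiber direction. Once that is in place, the vanishing is a dimension count: a $2$-complex cannot hit a generic point of $S^4$, so the characteristic $1$-manifold is empty and $[Y_1,f] = 0$.
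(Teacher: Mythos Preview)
Your approach is correct and is exactly the paper's: factor the second Postnikov map as $Y_1 \xrightarrow{p} \rp^2 \xrightarrow{i} Q$ and observe that the composite to $\mathbb H\mathrm P^{\infty}$ then misses a generic point of $S^4$, so the $1$-dimensional characteristic submanifold is empty. The ``bookkeeping'' you flag as the main obstacle is handled in one line in the paper: both the bundle projection $p\colon Y_1 \to \rp^2$ and the fiber inclusion $i\colon \rp^2 \hookrightarrow Q \simeq \cpi/\tau$ induce isomorphisms on $\pi_1$ and $\pi_2$, so $i\circ p$ is automatically (a model for) the second Postnikov map --- no further checking of quaternionic line bundle data is needed, and your alternative homological paragraph can be dropped.
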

\begin{proof}
$Y_{1}$ is an $S^{3}$-bundle over $\rp^{2}$, and the projection $p \colon Y_{1} \to \rp^{2}$ induces isomorphisms on $\pi_{1}$ and $\pi_{2}$. $Q \simeq \cpi /\tau$, which is an $\rp^{2}$-bundle over $\mathbb H \mathrm P^{\infty}$, the inclusion $i \colon \rp^{2} \to Q$ induces isomorphisms $\pi_{1}$ and $\pi_{2}$. Therefore $f \colon Y_{1} \to Q$ can be taken as the composite $i \circ p$. Recall from Proposition \ref{prop:two} that the bordism class $[Y_{1}, f]$ is determined by its $1$-dimensional characteristic manifold. Now since $f$ factors through $\rp^{2}$, the transversal preimage of $s_{0} \in S^{4}$ is empty. Therefore  $[Y_{1}, f]=0 \in \OS_{5}(Q)$. 
\end{proof}

\begin{lemma}
There is a normal $2$-smoothing $Y_{1}' \to \mathrm{BSpin} \times Q$ representing the nontrivial element in $\OS_{5}(Q)$.
\end{lemma}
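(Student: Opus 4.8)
The plan is to construct $Y_1'$ by surgery on a singular manifold representing the nontrivial class in $\OS_5(Q) \cong \z/2$, and then to verify that the resulting manifold has the correct universal cover. We already know from the computation of $\OS_5(Q)$ that the nontrivial element is represented by $g\colon V(2)/\tau \times S^1 \to \cpi/\tau \simeq Q$, where $S^1$ carries the nontrivial spin structure and the map collapses the $S^1$-factor. This is a singular $5$-manifold with a $B$-structure for $B = \mathrm{BSpin} \times Q$, but it is not a normal $2$-smoothing (the map is not $3$-connected, and indeed $V(2)/\tau \times S^1$ is not simply covered by $S^2\times S^3$). So the main work is to do surgery below the middle dimension to make the reference map a $3$-equivalence.

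Concretely, first I would start from $X := V(2)/\tau \times S^1$ with its normal $B$-structure $\bar\nu\colon X \to B$, and perform surgeries on $1$- and $2$-dimensional classes to turn $\bar\nu$ into a $3$-equivalence. Surgery on the $1$-skeleton makes $\pi_1(X) \to \pi_1(B) = \z/2$ an isomorphism (it already is, since $\pi_1(V(2)/\tau\times S^1) = \z/2\times\z$ — so I would kill the $\z$-factor coming from $S^1$ by surgery on that circle, being careful that the framing/spin structure is the one compatible with $\bar\nu$). Then surgery on $\pi_2$ makes $\pi_2(X) \to \pi_2(B) = \z$ an isomorphism. All these surgeries are $B$-bordisms, so the $B$-bordism class is unchanged, and since the relevant obstruction group in dimension $5$ is trivial at this stage (we are below the middle dimension), the class stays nontrivial. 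Call the result $Y_1'$; it is a normal $2$-smoothing representing the nontrivial element of $\OS_5(Q)$.

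Next I must check $\widetilde{Y_1'} \cong S^2 \times S^3$. Since $Y_1'$ has $\pi_1 = \z/2$, $\pi_2 = \z_-$, $w_1(Y_1') = 0$ (as $w_1$ pulls back from $Q$ and we arranged $P_2(Y_1') = Q$ with the orientable case), and $w_2(Y_1') = 0$ (this is forced along with $w_2(\widetilde{Y_1'})=0$ by the normal $2$-type being the one with $k=0$), the universal cover $\widetilde{Y_1'}$ is a simply-connected spin $5$-manifold. By Smale's classification it is determined by $H_2$; I would compute $H_2(\widetilde{Y_1'})$ — by construction $\pi_2(Y_1') \cong \z$ so $H_2(\widetilde{Y_1'};\z) \cong \z$, hence $\widetilde{Y_1'} \cong S^2 \times S^3$ or possibly $S^2 \tilde\times S^3$, and the spin condition rules out the twisted bundle. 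So $Y_1'$ is indeed the quotient of a free involution on $S^2\times S^3$.

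The main obstacle I expect is the surgery on the $S^1$-factor: one must do this surgery using precisely the normal data dictated by $\bar\nu$ so that the result is still a $B$-manifold, and one must confirm that killing $\pi_1$ down to $\z/2$ does not accidentally kill the nontrivial bordism class. Since we are in dimensions well below the middle dimension $3$ of a $6$-dimensional bordism, the surgery-below-the-middle-dimension part of Kreck's machinery guarantees we can always achieve a normal $2$-smoothing within the given $B$-bordism class, so the class is preserved automatically — the only real content is exhibiting a concrete starting manifold, which the $V(2)/\tau \times S^1$ construction supplies. A secondary point to be careful about is ruling out that $Y_1'$ is diffeomorphic to $Y_1$: this is immediate from Lemma \ref{lem:3.3} together with Proposition \ref{prop:one}, since $[Y_1, f] = 0 \ne [Y_1', \bar\nu] \in \OS_5(Q)$, so they are not even $B$-bordant, hence (being normal $2$-smoothings of manifolds with the same normal $2$-type) they would be diffeomorphic only if bordant — which they are not.
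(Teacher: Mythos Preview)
Your overall strategy---start from $V(2)/\tau \times S^1$ and perform surgeries to obtain a normal $2$-smoothing---is exactly the paper's approach. The gap is in the justification that these surgeries can actually be completed.

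You write that ``since we are in dimensions well below the middle dimension $3$ of a $6$-dimensional bordism, the surgery-below-the-middle-dimension part of Kreck's machinery guarantees we can always achieve a normal $2$-smoothing.'' This is not correct. Kreck's general result (Proposition~4 of \cite{Kreck}) says that a normal $B$-structure on an $n$-manifold can be surgered to an $\lfloor n/2\rfloor$-equivalence; for $n=5$ this is only a $2$-equivalence (isomorphism on $\pi_1$, surjection on $\pi_2$). A normal $2$-smoothing is a $3$-equivalence, which requires \emph{isomorphism} on $\pi_2$. Killing $\ker(\pi_2(M)\to\pi_2(B))$ means surgery on embedded $2$-spheres in $M^5$; the trace of such a surgery is a $3$-handle attached to $M\times[0,1]$, and $3$-handles \emph{are} middle-dimensional in a $6$-dimensional bordism. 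There is no general theorem guaranteeing this step succeeds.

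The paper carries out precisely this missing analysis. After the $1$-surgery one has $\pi_2(M)\cong(\z_-)^3$ with kernel $(\z_-)^2$. After surgery on one primitive kernel element, an exact braid computation shows $H_2(M';\Lambda)$ sits in an extension $0\to\z_+\to H_2(M';\Lambda)\to\z_-^2\to 0$; the paper uses Lemma~\ref{lemma:tors} (that $\mathrm{tors}\,H_2$ must be of the form $B\oplus B$) to resolve this extension as $\Lambda\oplus\z_-$ rather than $\z_+\oplus\z_-^2$. A further Tor computation shows the remaining kernel is $\Lambda$ rather than $\z_+\oplus\z_-$, and only because the kernel is \emph{free} over $\Lambda$ can a final surgery kill it cleanly. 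These module-theoretic verifications are the real content of the lemma, and your proposal skips them entirely.
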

\begin{proof}
First recall that the nontrivial element in $\OS_{5}(Q)$ is represented by 
$$V(2)/\tau \times S^{1} \stackrel{pr}{\longrightarrow} V(2)/\tau \to \cp^{3}/\tau \to \cpi/\tau \simeq Q.$$
Denote $V(2)/\tau$ by $N^{4}$. We have $\pi_{1}(N) \cong \z/2$, mapping isomorphically to $\pi_{1}(Q)$;  under a canonical basis, the map $\pi_{2}(N) \cong \z_{-}^2 \to \pi_{2}(Q) \cong \z_{-}$ is represented by the matrix $(1,1)$. 

Since $\pi_{1}(N)$ is mapped isomorphically to $\pi_{1}(Q)$ and the factor $S^{1}$ is mapped trivially to $Q$, we may first do framed $1$-surgery on $S^{1}$ to obtain $g \colon M^{5} \to Q$ such that $g_{*}$ is an isomorphism on $\pi_{1}$.  Now let us compute  $\pi_{2}(M)$ and $g_{*}$ on $\pi_{2}$. Now $M^5$ has the form $M=(N \times S^{1} - S^{1} \times D^{4}) \cup_{S^{1} \times S^{3}} D^{2} \times S^{3}$. Denote $N \times S^{1} - S^{1} \times D^{4}$ by $M_{0}$.  Let $\Lambda=\z[\z/2]$ be the group ring, the Mayer-Vietoris sequence with $\Lambda$-coefficients is 
$$0 \to H_{2}(M_{0}; \Lambda) \to H_{2}(M;\Lambda) \to H_{1}(S^{1} \times S^{3}; \Lambda) \to H_{1}(M_{0}; \Lambda) \to 0.$$
It's easy to see that $H_{2}(M_{0}; \Lambda) \cong H_{2}(N \times S^{1}; \Lambda) \cong \z_{-} \oplus \z_{-}$, $H_{1}(S^{1} \times S^{3}; \Lambda) \cong \Lambda$ and $H_{1}(M_{0}; \Lambda) \cong \z_{+}$. Therefore we have a short exact sequence
$$0 \to \z_{-} \oplus \z_{-} \to H_{2}(M; \Lambda) \to \z_{-} \to 0,$$
this implies $\pi_{2}(M) \cong H_{2}(M;\Lambda) \cong (\z_{-})^{3}$, and $g_{*} \colon \pi_{2}(M) \to \pi_{2}(Q)$ is surjective with kernel $\z_{-}^{2}$.

Since $M$ is of dimension $5$ and spin, we may represent a primitive element in the kernel by an embedded $2$-sphere with trivial normal bundle. We do (framed) surgery on this embedded $2$-sphere and obtain $g' \colon M' \to Q$. The effect of this surgery on $\pi_{2}$ can be analyzed by the following exact braid (homology with $\Lambda$-coefficients) \cite{Wall nonsimply}:

$$\xymatrix{
H_{3}(M') \ar[dr] \ar@/^2pc/[rr] & & H_{3}(W,M) \ar[dr] \ar@/^2pc/[rr]_{\partial} & & H_{2}(M) \ar[dr] \ar@/^2pc/[rr] & & 0\\
 & H_{3}(W) \ar[ur] \ar[dr] & & H_{3}(W, \partial W) \ar[ur] \ar[dr] & & H_{2}(W) \ar[ur] \ar[dr] & \\
 H_{3}(M) \ar[ur] \ar@/_2pc/[rr]^{j} & & H_{3}(W, M')  \ar[ur] \ar@/_2pc/[rr] & & H_{2}(M') \ar[ur] \ar@/_2pc/[rr] & & 0 
 }$$
where $W$ denotes the trace of the surgery. We have $H_{3}(W,M) \cong \Lambda \cong H_{3}(W,M')$, $H_{2}(W) \cong \z_{-}^{2}$,  $\ker \partial \cong \mathrm{coker} j \cong \z_{+}$. Therefore $H_{2}(M')$ is an extension of $\z_{-}^{2}$ by $\z_{+}$:
$$0 \to \z_{+} \to H_{2}(M'; \Lambda) \to \z_{-}^{2} \to 0.$$

The extension problem is solved by the following consideration: note we have a surjection $\pi_{2}(M') \to \pi_{2}(Q)$. By comparison with the Serre spectral sequence for $\cpi \to Q \to \rpi$ we see that the differential $d_{2} \colon E_{3,0}^{2} \to E_{0,2}^{2}$ in the Serre spectral sequence of $\w M' \to M' \to \rpi$ is nontrivial. If $H_{2}(M';\Lambda) \cong \z_{+} \oplus \z_{-}^{2}$, then tors$H_{2}(M;\z) \cong \z/2$, which contradicts Lemma \ref{lemma:tors}. Thus we must have $H_{2}(M';\Lambda) \cong \Lambda \oplus \z_{-}$.

The next step is to determine the kernel $K= \ker (g'_{*} \colon \pi_{2}(M') \to \pi_{2}(Q))$.  We have a short exact sequence
$0 \to K \to \Lambda \oplus \z_{-} \to \z_{-} \to 0$,
$K$ may be $\Lambda$ or $\z_{+} \oplus \z_{-}$.
Tensoring with $\z_{+}$ we get
$$\mathrm{Tor}_{1}^{\Lambda}(\z_{-}, \z_{+})=0 \to K \otimes_{\Lambda}\z_{+} \to \z \oplus \z/2 \to \z/2 \to 0.$$
From the comparison of the spectral sequences mentioned in last paragraph, we know that
$\mathrm{tors}(\pi_{2}(M') \otimes_{\Lambda} \z_{+} )\cong \z/2 \to \pi_{2}(Q) \otimes_{\Lambda}\z_{+} \cong \z/2$
must be an isomorphism.  From this it's easy to see that the assumption $K \cong \z_{+} \oplus \z_{-}$ will lead to a contradiction. Therefore $K \cong \Lambda$.

Now a standard argument in surgery shows that we may do a further surgery on an embedded $2$-sphere representing a generator of $K$ to kill the kernel $K$. The result is a normal $2$-smoothing $Y_{1}' \to \mathrm{BSpin} \times Q$.
\end{proof}

By \cite[p.~711]{Kreck}, the group $Aut(\mathrm{BSpin} \times Q)$ acts transitively on the set of normal $2$-smoothings of $Y_{1}$. On the other hand, we know that the $Aut(\mathrm{BSpin} \times Q)$-action is trivial on $\OS_{5}(Q)$, therefore  $Y_{1}'$ is not diffeomorphic to $Y_{1}$. 

Furthermore, for any $M^{5}$ homotopy equivalent to $Y_{1}$, let $h \colon M \to Y_{1}$ be a homotopy equivalence. Then the second stage Postnikov map of $M$ is given by 
$$\varphi \colon M \stackrel{h}{\longrightarrow} Y_{1} \stackrel{pr}{\longrightarrow} \rp^{2} \stackrel{i}{\longrightarrow} Q.$$
Therefore by the same argument in Lemma \ref{lem:3.3}, $[M,\varphi]=0 \in \OS_{5}(Q)$.  Thus $Y_{1}'$ is not homotopy equivalent to $Y_{1}$.

By the decomposition of the Thom spectrum $MSpin$ \cite{ABP}, we have a canonical isomorphism 
$$\OS_{5}(X) \stackrel{\cong}{\rightarrow} ko_{5}(X), \ \ [M,f] \mapsto f_{*}[M]_{ko}$$
where $[M]_{ko}$ is the $KO$-theory fundamental class of a spin manifold $M^{5}$. Therefore we have canonical isomorphisms
$$ko_{5}(Q) \cong \OS_{5}(Q) \cong \OS_{5}(S^{4}) \cong ko_{5}(S^{4}).$$
Capping with $(1,0) \in ko^{4}(S^{4}) \cong \z \oplus ko^{4}$ gives an isomorphism $ko_{5}(S^{4}) \to \z/2$. Since $ko^{4}(S^{4}) \cong ko^{4}(\mathbb H \mathrm P^{\infty})$, we obtain an element  in $ko^{4}(Q)$, denoted by $u$.  Then the isomorphism $\OS_{5}(Q) \cong \z/2$ is given by $[M,f] \mapsto \langle u, f_{*}[M]_{ko}\rangle = \langle f^{*}u, [M]_{ko}\rangle \in ko_{1} \cong \z/2$.
\end{proof}

\begin{remark}
The existence of two diffeomorphism classes in the case ($w_{1}(M)=0$, $w_{2}(M)=0$, $\pi_{2}(M)=\z_{-}$) gives the following interesting examples of embeddings: starting from an $M^{5}$ of this type, representing a generator of $\pi_{2}(M)$ by an embedded $2$-sphere and doing surgery on this sphere, we obtain a manifold $N^{5}$ with $\pi_{1}(N) \cong \z/2$, $w_{1}(N)=0$, $w_{2}(N)=0$.  A direct calculation using the  exact braid as above shows that $\pi_{2}(N) \cong \z_{+}$.  By Theorem \ref{thm:one} (1), $N \cong S^{2} \times \rp^{3}$. Reversing the surgery process, we see that $M$ can be obtained by doing surgery on an embedded $2$-sphere in $S^{2} \times \rp^{3}$ representing a generator of $\pi_{2} \cong \z$. Since there are two diffeomorphism types of such $M$, and there is no ambiguity coming from framings ($\pi_{2}(SO(3)=0$), we conclude that there are two embeddings of $S^{2}$ into $S^{2} \times \rp^{3}$ which are homotopic but not isotopic.  Note that the embedding $S^{2} \hookrightarrow S^{2} \times \rp^{3}$ is $0$-connected, this is just beyond the range of Haefliger \cite[Theorem 1 (b)]{Haefliger}, within which homotopic embeddings are isotopic.
\end{remark}

\end{document}